\definecolor{darkgreen}{rgb}{0,0.75,0}
\definecolor{darkred}{rgb}{0.75,0,0}
\definecolor{darkmagenta}{rgb}{0.5,0,0.5}
\newcommand{\mr}[1]{{\tt \href{http://www.ams.org/mathscinet-getitem?mr=#1}{MR#1}}}
\newcommand{\arxiv}[1]{{\tt \href{http://arxiv.org/abs/#1}{arXiv:#1}}}
\newcommand{\old}[1]{}
\newcommand{\abs}[1]{{\left\vert\kern-0.25ex #1
    \kern-0.25ex\right\vert}}
\providecommand{\flr}[1]{\left \lfloor #1 \right \rfloor } %floor
\providecommand{\cil}[1]{\left \lceil #1 \right \rceil } %ceil
\DeclareRobustCommand{\SkipTocEntry}[5]{}
\newtheorem{theorem}{Theorem}[section]
\newtheorem{proposition}[theorem]{Proposition}
\newtheorem{lemma}[theorem]{Lemma}
\newtheorem{corollary}[theorem]{Corollary}
\theoremstyle{remark}
\newtheorem*{remark}{Remark}
\newtheorem*{example}{Example}
\numberwithin{counter}{section}
\theoremstyle{definition}
\newtheorem{definition}[theorem]{Definition}
\def\00{\mathbf{0}}
\def\E{\mathcal{E}}
\def\N{\mathbb{N}}
\def\R{\mathbb{R}}
\def\EE{\mathbb{E}}
\def\PP{\mathbb{P}}
\newcommand\norm[1]{\left\lVert#1\right\rVert} %norm
\def \restr{\big|} %function restrictions
\begin{document}
\title{ Anomalous threshold behavior of long range random walks.}
\author[M. Murugan]{Mathav Murugan} 
\address{Department of Mathematics, University of British Columbia and Pacific Institute for the Mathematical Sciences, Vancouver, BC V6T 1Z2, Canada.}
\email[M. Murugan]{mathav@math.ubc.ca}

\author[L. Saloff-Coste]{Laurent Saloff-Coste$^\dagger$  }
\thanks{$\dagger$Both the authors were partially supported by NSF grant DMS 1404435}
\address{Department of Mathematics, Cornell University, Ithaca, NY 14853, USA}
\email[L. Saloff-Coste]{lsc@math.cornell.edu}
\date{\today}
\subjclass[2010]{60J10, 60J75, 60J15}
\begin{abstract}
We consider weighted graphs satisfying sub-Gaussian estimate for the natural random walk.  On such graphs, we study symmetric Markov chains with heavy tailed jumps.
We establish a threshold behavior of such Markov chains when the index governing the tail heaviness (or jump index) equals the escape time exponent (or walk dimension) of the sub-Gaussian estimate.
In a certain sense, this generalizes the classical  threshold corresponding to the second moment condition.
\end{abstract}
\maketitle
\section{Introduction}
This work concerns a new threshold behavior of random walks on graphs driven by low moment measures.
As the title suggests, this work combines two lines of research that have been actively pursued: anomalous random walks and long range random walks.
The graphs were are interested in have a nearest neighbor random walk that satisfies sub-Gaussian estimates.  Sub-Gaussian estimates for nearest neighbor random walks are typical of many regular fractals
like Sierpinski gaskets, carpets and the Viscek graphs. See \cite{Kum} for a recent survey on such \emph{anomalous random walks}.
Another line of work that has received much attention recently is the long term behavior of random walks with \emph{heavy tailed jumps}. For example \cite{BL}, \cite{CK1}, \cite{CK2}, \cite{BBK}, \cite{BGK} are just a
few works in this direction.
In much of the existing literature the `jump index' $\beta$ is assumed to be in $(0,2)$. Our work is a modest attempt to understand the behavior of such random walks when $\beta \in (0,\infty)$.

The motivation for our work comes from a recent work by  the second author and Zheng \cite{SZ}. In \cite{SZ},
the behavior of long range random walks on groups was investigated for the full range of the jump index $\beta \in (0,\infty)$.
For random walks on groups there is a threshold behavior at $\beta=2$.
For graphs satisfying a sub-Gaussian heat kernel estimate, we show that the threshold behavior happens when the jump index $\beta$ equals the escape time exponent.

Let $\Gamma$ be an infinite, connected, locally finite graph endowed with a \emph{weight} $\mu_{xy}$. The elements of the set $\Gamma$ are called vertices. Some of the vertices are connected by an edge, in which case
we say that they are neighbors. The weight is a symmetric non-negative function on $\Gamma \times \Gamma$ such that $\mu_{xy}>0$ if and only if $x$ and $y$ are neighbors (in which case we write $x \sim y$).
We call the pair $(\Gamma,\mu)$ a \emph{weighted graph}.
The weight $\mu_{xy}$ on the edges induces a weight $\mu(x)$ on the vertices and a measure $\mu$ on subsets $A \subset \Gamma$ defined by
\[
 \mu(x):= \sum_{y: y \sim x} \mu_{xy}\hspace{5mm} \operatorname{and}\hspace{5mm} \mu(A):= \sum_{x \in A} \mu(x).
\]
Let $d(x,y)$ be the graph distance between points $x,y \in \Gamma$, that is the minimal number of edges in any edge path connecting $x$ and $y$. Denote the metric balls and their measures as follows
\[
 B(x,r):= \{ y \in \Gamma : d(x,y) \le r \}\hspace{5mm} \operatorname{and}\hspace{5mm} V_\mu(x,r):= \mu(B(x,r))
\]
for all $x \in \Gamma$ and $r \ge 0$.
 We assume that the measure $\mu$ is comparable to the counting measure in the sense that there exists $C_\mu \in [1,\infty)$ such that $\mu_x = \mu\left( \{x\}\right)$ satisfies
\begin{equation} \label{e-count}
 C_\mu^{-1} \le \mu_x \le C_\mu
\end{equation}
We consider weighted graphs $(\Gamma,\mu)$ satisfying the following uniform volume doubling assumption:
there exists $V :[0,\infty) \to (0,\infty)$, a strictly increasing continuous function and constants $C_D,C_h  > 1$ such that
\begin{equation} \label{e-vd}
 V(2r) \le C_D V(r)
\end{equation}
for all $r >0$ and
\begin{equation} \label{e-hom}
C_h^{-1} V(r) \le V_\mu(x,r) \le C_h V(r)
\end{equation}
for all $x \in \Gamma$ and for all $r >0$. It can be easily seen from \eqref{e-vd} that
\begin{equation} \label{e-vc}
 \frac{V(R)}{V(r) } \le C_D \left( \frac{R}{r} \right)^\alpha
\end{equation}
for all  $0 < r \le R$ and for all $\alpha \ge \log_2 C_D$. For the rest of the work, we will assume that
our weighted graph $(\Gamma,\mu)$ satisfies \eqref{e-count}, \eqref{e-vd} and \eqref{e-hom}.
\begin{remark}
 If $(\Gamma,\mu)$ satisfies \eqref{e-vd} and \eqref{e-hom}, we may assume that $V(n) = V_\mu(x_0,n)$ for some fixed $x_0$ and for all natural numbers $n$. For non-integer values we can extend it by linear interpolation.
 Since the graph is connected, infinite and locally finite, the function $V$ defined above
 is continuous, strictly increasing on $[0,\infty)$.
\end{remark}

There is a natural random walk $X_n$ on $(\Gamma,\mu)$ associated with the edge weights $\mu_{xy}$. The Markov chain is defined by the following
one-step transition probability
\[
  P(x,y)= \PP^{x}(X_1=y)= \frac{\mu_{xy}}{\mu(x)}.
\]
We will assume that there exists $p_0>0$ such that
\begin{equation}P(x,y) \ge p_0 \label{e-p0}\end{equation}
for all $x,y$ such that $x \sim y$.
We also consider $P$ as a Markov operator which acts on functions of $\Gamma$ by
\[
 Pf(x)= \sum_{y \in \Gamma} P(x,y)f(y).
\]

We will denote non-negative integers by $\N= \{ 0,1,2,\ldots \}$ and positive integers by $\N^*=  \{ 1,2,3,\ldots \}$.
For any non-negative integer $n$, the $n$-step transition probability $P_n$ is defined by $P_n(x,y)= \PP(X_n=y \mid X_0=x)= \PP^{x}(X_n=y)$. % Alternatively it is the $n$-th convolution power of operator $P$.
Define the \emph{heat kernel} of weighted graph $(\Gamma,\mu)$ by
\[
 p_n(x,y) := \frac{P_n(x,y)}{\mu(y)}.
\]
This Markov chain is symmetric with respect to the measure $\mu$, that is $p_n(x,y)=p_n(y,x)$ for all $x,y \in \Gamma$ and for all $n \in \N$.
We assume that there exists $\gamma > 1$ such that the following \emph{sub-Gaussian} estimates are true for the heat kernel $p_n$. There exist constants $c,C>0$ such that, for all $x,y \in \Gamma$
\begin{equation}\label{e-uep}
 p_n(x,y) \le \frac{C}{V(n^{1/\gamma})} \exp \left[ - \left( \frac{d(x,y)^\gamma}{Cn} \right)^{\frac{1}{\gamma-1}} \right], \forall n \ge 1
\end{equation}
and
\begin{equation}\label{e-lep}
(p_n+p_{n+1})(x,y) \ge \frac{c}{V(n^{1/\gamma})}  \exp \left[ - \left( \frac{d(x,y)^\gamma}{cn} \right)^{\frac{1}{\gamma-1}} \right], \forall n \ge 1 \vee d(x,y).
\end{equation}
Let  $\langle \cdot, \cdot \rangle$ denote the inner product in $\ell^2(\Gamma,\mu)$.
For the Markov operator $P$, define the corresponding Dirichlet form $\E_P$  by
\[
 \E_P(f):= \langle (I-P)f,f \rangle = \frac{1}{2}\sum_{x,y \in \Gamma} (f(x)-f(y))^2 \mu_{xy}
\]
for all $f \in \ell^2(\Gamma,\mu)$.
For any two sets $A,B \subset \Gamma$, the resistance $R_P(A,B)$ is defined by
\[
 R_P (A,B)^{-1} = \inf \left\{ \E_P(f,f):f \in \R^\Gamma, f\restr_A\equiv 1, f\restr_B\equiv 0 \right\}
\]
where $\inf \emptyset = +\infty$. By  \cite[Theorem 3.1]{GT}, we have the following estimate for the resistance. There exist constants $C_R, A>1$ such that
\begin{equation} \label{e-res}
C_R^{-1} \frac{r^\gamma}{V(r)}\le R_P(B(x,r),B(x,A r)^c) \le C_R \frac{r^\gamma}{V(r)}
\end{equation}
for all $x \in \Gamma$ and for all $r\ge 1$. Other related work that characterizes the sub-Gaussian estimates \eqref{e-uep} and \eqref{e-lep} are \cite{GT0} and \cite{BCK}.

The parameter $\gamma$ in \eqref{e-uep} and \eqref{e-lep} is sometimes called the `\emph{escape time exponent}' or `\emph{anomalous diffusion exponent}' or `\emph{walk dimension}'.
It is known that $\gamma \ge 2$ necessarily (see for instance \cite[Theorem 4.6]{CG}).
For any $\alpha \in [1,\infty)$ and for any $\gamma \in [2,\alpha+1]$,
Barlow constructs graphs of polynomial volume growth satisfying $V(x,r) \simeq (1+r)^\alpha$ and sub-Gaussian estimates \eqref{e-uep} and \eqref{e-lep} (see  \cite[Theorem 2]{Bar} and  \cite[Theorem 3.1]{GT}).
Moreover, these are the complete range of $\alpha$ and $\gamma$ for which sub-Gaussian estimates with escape rate exponent $\gamma$ could possibly hold for graphs of polynomial growth with growth exponent $\alpha$.

Let  $\phi:[0,\infty) \to [1,\infty)$ be a  continuous, regularly varying  function of positive index.
We say a Markov operator $K$ satisfies $(J_\phi)$ if it has symmetric kernel $k$ with respect to the measure $\mu$ and if there exists a constant $C_\phi>0$ such that
\begin{equation} \label{e-ker}
C_\phi^{-1} \frac{1}{V(d(x,y)) \phi(d(x,y))} \le k(x,y)= k(y,x) \le  C_\phi \frac{1}{V(d(x,y)) \phi(d(x,y))} \tag*{($J_\phi$)}
\end{equation}
for all $x,y \in \Gamma$. Let $k_n(x,y)$ denote the kernel of the iterated power $K^n$ with respect to the measure $\mu$.
If $K$ satisfies \ref{e-ker} and if $\phi$ is regularly varying with index $\beta >0$, then we say that $\beta$ is the \emph{jump index} of the random walk driven by $K$.
Here by random walk driven by $K$, we mean the discrete time Markov chain $(Y_n)_{n \in N}$ with transition probabilities given by
\[
 \PP(Y_1 = y | Y_0=x) = K \mathbb{1}_y(x)= k(x,y) \mu(y).
\]

\
We demonstrate threshold behavior as the jump index $\beta$ varies by analyzing the function
\begin{equation} \label{e-defpsi}
  \psi_K(n)=\norm{K^{2n}}_{1 \to \infty}= \norm{K^n}_{1\to 2}^2=\sup_{x \in \Gamma} k_{2n}(x,x)=\sup_{x,y \in \Gamma} k_{2n}(x,y)
\end{equation}
as $n \to \infty$ (see \cite{PSu} for a proof of \eqref{e-defpsi}).
The following theorem gives bounds on $\psi_K(n)$ that are sharp up to constants.
\begin{theorem}\label{t-main}
 Let $(\Gamma,\mu)$ be a weighted graph satisfying \eqref{e-count}, \eqref{e-vd}, \eqref{e-hom}, \eqref{e-p0} and suppose that
 its heat kernel $p_n$ satisfies the sub-Gaussian bounds \eqref{e-uep} and \eqref{e-lep} with escape time exponent $\gamma$. Let $K$ be a Markov operator symmetric with respect to
 the measure $\mu$ satisfying \ref{e-ker}, where $\phi:[0,\infty) \to [1,\infty)$ is a continuous regularly varying function of positive index. Then there exists a constant $C>0$ such that
 \begin{equation} \label{e-main}
  \frac{C^{-1}}{V(\zeta(n))} \le \psi_K(n) \le \frac{C}{V(\zeta(n))}
 \end{equation}
for all $n \in \N$, where $\zeta:[0,\infty) \to [1,\infty)$ is a continuous non-decreasing function which is an asymptotic inverse of
$t \mapsto {t^\gamma}/{ \int_{0}^t \frac{s^{\gamma-1}\,ds}{\phi(s)}}$.
\end{theorem}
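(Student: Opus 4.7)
The plan is to derive matched two-sided bounds on $\psi_K(n)$ via a Nash-type functional inequality for $\E_K$ that merges the local sub-Gaussian behavior with the long-range jump structure, then extract the upper bound via Coulhon's differential-inequality machinery and the lower bound via a first-exit-time argument. The crux is the Nash-type inequality
\[
\|f\|_2^2 \le C\,\Psi(r)\,\E_K(f,f) + \frac{C}{V(r)}\,\|f\|_1^2, \qquad r \ge 1,\; f \in \ell^2(\Gamma,\mu),
\]
where $\Psi(r) = r^\gamma / \int_0^r s^{\gamma-1}\,ds/\phi(s)$, the function whose asymptotic inverse is $\zeta$. To prove it, the sub-Gaussian estimates \eqref{e-uep}--\eqref{e-lep} combined with \cite{GT} yield for the natural walk $P$ a scale-$r$ Nash inequality $\|f\|_2^2 \le C r^\gamma \E_P(f,f) + (C/V(r))\|f\|_1^2$, while the long-range jumps at scales $>r$ contribute via the identity $\E_{K_{(r)}}(f,f) = \langle f^2, K_{(r)}\mathbf 1\rangle - \langle f, K_{(r)}f\rangle$: the bounds $K_{(r)}\mathbf 1(x) \asymp 1/\phi(r)$ (from \ref{e-ker} together with \eqref{e-vd}, \eqref{e-hom}) and $\|K_{(r)}\|_{1\to\infty} \le C/(V(r)\phi(r))$ yield
\[
\E_K(f,f) \ge \E_{K_{(r)}}(f,f) \ge \frac{c}{\phi(r)}\,\|f\|_2^2 - \frac{C}{V(r)\phi(r)}\,\|f\|_1^2.
\]
The $\Psi$-Nash inequality arises by integrating these two estimates over a continuous family of truncation scales $s\le r$, weighting each by the local jump density $1/\phi(s)$, so that $\int_0^r s^{\gamma-1}/\phi(s)\,ds$ emerges as the precise blend of local Poincar\'e constants and long-range jump weights.

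Given the $\Psi$-Nash inequality, the upper bound $\psi_K(n) \le C/V(\zeta(n))$ follows from Coulhon's differential-inequality argument (cf.\ \cite{PSu}); the regular variation of $\phi$ together with doubling of $V$ ensures that the integration converges to $V(\zeta(n))$ up to constants. For the matching lower bound, combine $k_{2n}(x,x) = \sum_y k_n(x,y)^2 \mu(y)$ with the Cauchy--Schwarz inequality to reduce to $\PP^x(Y_n \in B(x, A\zeta(n))) \ge c$ for some $A>0$. This escape-time estimate follows from the resistance bound \eqref{e-res} (applied to $\E_K$ after comparison with $\E_P$) combined with a Dirichlet-form variance estimate on $Y_n$ derived from the same $\Psi$-Nash inequality.

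The main obstacle is the derivation of the $\Psi$-Nash inequality itself. A naive combination of the scale-$r$ local Nash inequality (factor $r^\gamma$, obtained from $\E_K \gtrsim \E_P$ via nearest-neighbor jumps) and the scale-$r$ long-range Nash inequality (factor $\phi(r)$, derived above) yields only $\min(r^\gamma,\phi(r))$, which is sharp away from the threshold $\beta = \gamma$ but misses the logarithmic correction at $\beta = \gamma$ predicted by the theorem. Capturing this correction requires integrating contributions from \emph{all} scales $s \le r$ simultaneously, and is precisely where the integral form of $\Psi$ (rather than a pointwise minimum) emerges; I expect this multi-scale bookkeeping to be the most delicate part of the argument.
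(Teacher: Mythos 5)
Your overall skeleton---a Nash inequality with constant $\Psi(r)=r^\gamma/\int_0^r s^{\gamma-1}\phi(s)^{-1}ds$, Coulhon-style ultracontractivity for the upper bound, and a test-function/escape argument for the lower bound---matches the paper, and you correctly diagnose that a naive combination of the two scale-$r$ Nash inequalities only produces $\min(r^\gamma,\phi(r))$ and thus misses the logarithmic correction at $\beta=\gamma$. However, the step you flag as ``delicate'' is in fact where your route breaks down. The truncation $K_{(s)}$ only furnishes a one-sided inequality $\E_K\ge\E_{K_{(s)}}$ for each fixed $s$; it does not supply a \emph{decomposition} of $\E_K$ as a sum of commensurate scale-$s$ pieces, and a family of inequalities cannot be ``integrated over $s$'' to strengthen the conclusion beyond the best single $s$. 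What the paper does instead is build, by discrete subordination of the diffusion $Q=\tfrac12(P+P^2)$, a Markov operator
\[
Q_\phi=\sum_{n\ge1}\frac{c_\phi}{n\phi(n)}\,Q^{2\lfloor n^\gamma\rfloor}
\]
whose kernel is comparable to $k$ (Proposition~\ref{p-sub}), so that $\E_K\asymp\E_{Q_\phi}=c_\phi\sum_n\frac{1}{n\phi(n)}\E_{Q^{2\lfloor n^\gamma\rfloor}}$ is a genuine sum over scales. The pseudo-Poincar\'e inequality $\|f-f_R\|_2^2\le C(R/k)^\gamma\E_{Q^{2\lfloor k^\gamma\rfloor}}(f,f)$ (Lemma~\ref{l-pp0}) then holds for \emph{every} summand, and summing over $k\le R$ with the subordination weights produces exactly $R^{-\gamma}\sum_{k\le R}k^{\gamma-1}/\phi(k)$. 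This is the missing mechanism; without a decomposition of the Dirichlet form you cannot get $\Psi(R)$ below $\min(R^\gamma,\phi(R))$.

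Your lower-bound sketch has a related gap. You propose to apply the resistance estimate \eqref{e-res} ``to $\E_K$ after comparison with $\E_P$,'' but $\E_K$ and $\E_P$ are \emph{not} comparable: one only has $\E_P\lesssim\E_K$, and for $\beta<\gamma$ the long-range part of $\E_K$ dominates. So a test function cheap for $\E_P$ need not be cheap for $\E_K$, and the Nash inequality (an upper-bound tool) does not yield $\PP^x(Y_n\in B(x,A\zeta(n)))\ge c$. The paper sidesteps this by using the spectral-radius bound $\psi_K(n)\ge\lambda(B(x,R))^n/\mu(B(x,R))$ and computing $\E_{K^2}(f,f)$ via the \emph{same} subordination decomposition: for the test function $f$ coming from the resistance estimate \eqref{e-res} one has $\E_{Q^{2\lfloor n^\gamma\rfloor}}(f,f)\le 2n^\gamma\E_Q(f,f)$ for $n\le R$ (Lemma~\ref{l-dircomp}) and $\E_{Q^{2\lfloor n^\gamma\rfloor}}(f,f)\le\|f\|_2^2$ for $n>R$, whence $\E_K(f,f)/\|f\|_2^2\lesssim\eta(R)^{-1}$. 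So both halves of the theorem rest on the subordination representation of $\E_K$, which is the one idea your proposal is missing.
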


\begin{example}
 We write $\phi$ in Theorem \ref{t-main} as $\phi(t)= \left((1+t)l(t)\right)^\beta $ where $l$ is a slowly varying function
 (we refer the reader to \cite[Chap. I]{BGT} for a textbook introduction on slowly and regularly varying functions).
 The function $\zeta$ of Theorem \ref{t-main} can be described more explicitly as follows:
 \begin{itemize}
  \item If $\beta > \gamma$, $\zeta(t) \simeq t^{1/\gamma}$.
  \item If $\beta < \gamma$, we have ${t^\gamma}/{ \int_{0}^t \frac{s^{\gamma-1}\,ds}{\phi(s)}} \simeq \phi(t) $ and $\zeta$ is essentially the asymptotic inverse of $\phi$, namely
  \[
   \zeta(t) \simeq t^{1/\beta} l_\# (t^{1/\beta})
  \]
 where $l_\#$ is the de Bruijn conjugate of $l$. For instance, if $l$ has the property that $l(t^a) \simeq l(t)$ for all $a>0$, then $l_\# \simeq 1/l$.
 \item If $\beta=\gamma$, the situation is more subtle. The function  $\eta(t)={t^\gamma}/{ \int_{0}^t \frac{s^{\gamma-1}\,ds}{\phi(s)}}$ is regularly varying of index $\gamma$ and $\eta(t) \le C_1 \phi(t)$ for some constant $C_1$.
 For example if $l \equiv 1$, we have $\eta(t) \simeq t^\gamma / \log t$ and $\zeta (t) \simeq (t \log t)^{1/\gamma}$. When $l(t) \simeq (\log t)^{\rho/\gamma}$ with $\rho \in \R$, then
 \begin{itemize}
  \item If $\rho >1$, $\eta(t) \simeq t^\gamma$ and $\zeta(t) \simeq t^{1/\gamma}$.
  \item If $\rho=1$,  $\eta(t) \simeq t^\gamma/ \log \log t$ and $\zeta(t) \simeq (t \log \log t)^{1/\gamma}$.
  \item If $\rho <1$, $\eta(t) \simeq t^\gamma/ (\log t)^{1-\rho}$ and $\zeta(t) \simeq \left(t (\log  t)^{1-\rho} \right)^{1/\gamma}$.
 \end{itemize}
 \end{itemize}

\end{example}

\begin{remark}\label{r-main}\leavevmode
\begin{enumerate}[(a)]
\item The conclusion of Theorem \ref{t-main} holds if $K$ is symmetric with respect to a different measure $\nu$ that is comparable to the counting measure in the sense described by \eqref{e-count}.
This can be seen by comparing $\psi_K$ with $\psi_{Q_\phi}$ where $Q_\phi$ will be defined in \eqref{e-qp}.
We simply use the definition \eqref{e-defpsi} along with the fact that $L^p(\Gamma,\nu)$ and $L^p(\Gamma,\mu)$ have comparable norms.
\item The condition \eqref{e-p0} is required only for the lower bound on $\psi_K$.
\item Let $\phi$ in Theorem \ref{t-main} be regularly varying with index $\beta>0$. If $\beta \in (0,2)$ we know matching two sided estimates on $k_n(x,y)$ for all $n \in \N$ and for all $x,y \in \Gamma$.
 Assume that $\phi(t)= ((1+t)l(t))^\beta $ where $l$ is a slowly varying function. The main result of \cite{MS1} states that
 \begin{equation} \label{e-full}
  k_n(x,y) \simeq \left( \frac{1}{ V(n^{1/\beta}l_{\#}(n^{1/\beta}))} \wedge \frac{n}{ V(d(x,y))\phi(d(x,y))} \right),
 \end{equation}
 where $l_\#$ is the de Bruijn conjugate of $l$.
 \item  We conjecture that the two-sided estimate \eqref{e-full} is true for any $\beta \in (0,\gamma)$, where $\gamma$ is the escape time exponent for the sub-Gaussian estimate in \eqref{e-uep} and \eqref{e-lep}.lem
 The proof of \eqref{e-full} in \cite{MS1} doesn't seem to work if $\beta \in [2, \gamma)$.
 In particular, the use of Davies' method to prove off-diagonal upper bounds does not seem to work directly.
  \item The conclusion of Theorem \ref{t-main} can be strengthened for random walks on groups for all values of $\beta$ ($\gamma$ is necessarily $2$ for random walks on groups). See \cite[Theorem 1.5]{SZ} for more.
 \item  Another intriguing question is to find matching two-sided estimates $k_n(x,y)$ for the case $\beta \ge \gamma$ for appropriate range of $d(x,y)$. In light of \cite[Theorem 1.5]{SZ} for random walks on groups, we conjecture that
 \[
  k_n(x,y) \simeq \frac{1}{V( \zeta(n))}
 \]
for all $n \in \N^*$ and for all $x,y \in \Gamma$ such that $d(x,y) \le \zeta(n)$.
\item It is a technically challenging open problem to replace the homogeneous volume doubling assumptions \eqref{e-vd} and \eqref{e-hom} by the more  general volume doubling assumption:
there exists $C_D>0$ such that $V(x,2r) \le C_D V(x,r)$ for all $x \in \Gamma$ and for all $r>0$.
\end{enumerate}
\end{remark}
Theorem \ref{t-main} indicates a possible moment threshold behavior. We define moment of random walk as follows.
\begin{definition}For a Markov operator $K$ on $\Gamma$ and any number $r>0$, we define the $r$-moment of random walk driven by $K$ as
\[
 M_{r,K}:= \sup_{x \in \Gamma} \EE^x d(X_0,X_1)^r= \sup_{x \in \Gamma} \left(K (d_x^r)\right)(x)
\]
where $(X_n)_{n \in \N}$ is a random walk driven by the Markov operator $K$ and $d_x^r:\Gamma \to \R$ denotes the function $y \mapsto (d(x,y))^r$.
\end{definition}
Here is a corollary of Theorem \ref{t-main} that illustrates moment threshold behavior of random walks. It states that the asymptotic behavior of $\psi_K$ is same as $\psi_P$ corresponding to the natural random walk
if and only if $K$ has finite $\gamma$-moment.
\begin{corollary} \label{c-moment}
  Let $(\Gamma,\mu)$ be an infinite, weighted graph satisfying \eqref{e-count}, \eqref{e-vd}, \eqref{e-hom}, \eqref{e-p0} and
 its heat kernel $p_n$ satisfies the sub-Gaussian bounds \eqref{e-uep} and \eqref{e-lep} with escape time exponent $\gamma$. Let $K$ be a Markov operator symmetric with respect to
 the measure $\mu$ satisfying \ref{e-ker}, where $\phi:[0,\infty) \to [1,\infty)$ is a continuous regularly varying function of positive index. Then the following are equivalent:
 \begin{enumerate}[(a)]
  \item $K$ has finite $\gamma$-moment, that is $M_{\gamma, K} < \infty$.
  \item There exists a constant $C>0$ such that
 \begin{equation} \label{e-srw}
  \frac{C^{-1}}{V(n^{1/\gamma})} \le \psi_K(n) \le \frac{C}{V( n^{1/\gamma})}
 \end{equation}
for all $n \in \N$.
 \end{enumerate}
\end{corollary}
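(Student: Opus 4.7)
The plan is to funnel both (a) and (b) through a single analytic criterion---the convergence of $\int_1^\infty s^{\gamma-1}/\phi(s)\,ds$---and then apply Theorem \ref{t-main}. Throughout I will use that the hypotheses force reverse volume doubling, i.e., there exists $c_\ast>1$ such that $V(2r)\ge c_\ast V(r)$ for all $r\ge 1$, a standard consequence of \eqref{e-lep} on an infinite graph.

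First I would estimate the moment. Using \ref{e-ker}, \eqref{e-count}, and the dyadic annular decomposition $A_j(x):=B(x,2^{j+1})\setminus B(x,2^j)$, reverse volume doubling yields $|A_j(x)|\simeq V(2^j)$, while $d(x,y)^\gamma\simeq 2^{j\gamma}$, $V(d(x,y))\simeq V(2^j)$ and $\phi(d(x,y))\simeq \phi(2^j)$ for $y\in A_j(x)$. Summing over annuli and comparing the resulting dyadic series with an integral via regular variation of $\phi$ gives
\begin{equation*}
M_{\gamma,K}\;\simeq\;\sum_{j\ge 0}\frac{2^{j\gamma}}{\phi(2^j)}\;\simeq\;\int_1^\infty\frac{s^{\gamma-1}}{\phi(s)}\,ds,
\end{equation*}
so (a) is equivalent to the convergence of this integral.

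Next I would translate the criterion into an asymptotic statement about $\zeta$. Let $I(t):=\int_0^t s^{\gamma-1}/\phi(s)\,ds$, which is continuous and nondecreasing since $\phi\ge 1$ and $\gamma\ge 2$, and recall that $\zeta$ is an asymptotic inverse of $\eta(t):=t^\gamma/I(t)$. The convergence $I(\infty)<\infty$ is equivalent to $\eta(t)\simeq t^\gamma$ for large $t$, which, by taking asymptotic inverses of regularly varying functions, is equivalent to $\zeta(n)\simeq n^{1/\gamma}$. Combining with $\psi_K(n)\simeq 1/V(\zeta(n))$ from Theorem \ref{t-main} and volume doubling \eqref{e-vd} immediately yields (a)$\Rightarrow$(b). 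For the converse, if (a) fails then $I(\infty)=\infty$, and a Karamata analysis (in the two cases $\beta<\gamma$ and $\beta=\gamma$ with divergent integral, as in the example following Theorem \ref{t-main}) shows $\zeta(n)/n^{1/\gamma}\to\infty$; reverse volume doubling then forces $V(\zeta(n))/V(n^{1/\gamma})\to\infty$, contradicting (b) through Theorem \ref{t-main}.

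The main technical obstacle is this last step: to promote $\zeta(n)/n^{1/\gamma}\to\infty$ to $V(\zeta(n))/V(n^{1/\gamma})\to\infty$ one genuinely needs reverse volume doubling for $V$, a property not listed among the hypotheses but derivable from the sub-Gaussian lower bound and the infiniteness of $\Gamma$. Once reverse doubling is secured, the rest reduces to routine bookkeeping around Karamata's theorem and the asymptotic-inverse calculus used to describe $\zeta$.
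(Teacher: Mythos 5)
Your proposal is correct and follows essentially the same route as the paper: both arguments funnel (a) and (b) through the convergence criterion for $\int^\infty s^{\gamma-1}/\phi(s)\,ds$, both estimate $M_{\gamma,K}$ by a dyadic annular decomposition of the kernel bound \ref{e-ker}, and both crucially invoke reverse volume doubling. The only cosmetic differences are that you make explicit the equivalence $\int<\infty \Leftrightarrow \eta(t)\simeq t^\gamma \Leftrightarrow \zeta(n)\simeq n^{1/\gamma}$, which the paper compresses into one sentence, while the paper is more explicit about the condensation-test bookkeeping (Cauchy and Schl\"omilch) that you compress; also, the paper derives reverse doubling from \eqref{e-vd}, \eqref{e-hom} and infiniteness via \cite[Prop.\ 3.3]{GHL} rather than from the sub-Gaussian lower bound as you suggest, though the conclusion is the same.
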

\begin{remark}
 For random walks on groups one must have $\gamma=2$ and such a second moment threshold behavior is known in greater generality \cite[Theorem 1.4 and Corollary 1.5]{PS}.
 See \cite{BS1}, \cite{BS2} and \cite{SZ} for extensions and generalizations of such moment threshold behavior for random walks on groups. It is
 an interesting open problem to formulate and prove a $\gamma$-moment threshold in greater generality without the assumption \ref{e-ker}.
\end{remark}

\begin{proof}[Proof of Corollary \ref{c-moment}]
By Theorem \ref{t-main}, (b) holds if and only if $ \int_0^\infty \frac{s^{\gamma-1} \,ds}{\phi(s)} < \infty$.
Therefore (b) holds if and only if
\begin{equation} \label{e-cor1}
 \sum_{n=1}^\infty \frac{n^{\gamma-1}}{ \tilde \phi(n) } < \infty,
\end{equation}
where $\tilde \phi(x) = \sup_{t \in [0,x] } \phi(x)$.
The above statement follows
from Potter's bounds \cite[Theorem 1.5.6]{BGT}, continuity of $\phi$, Theorem 1.5.3 of \cite{BGT} and uniqueness of asymptotic inverse up to asymptotic equivalence.

 By \ref{e-ker} and Theorem 1.5.3 of \cite{BGT}, the condition $M_{\gamma,K} < \infty$ holds if and only if
 \begin{equation} \label{e-cor2}
  \sum_{y \in \Gamma} \frac{d(x,y)^\gamma}{ V(d(x,y)) \phi(d(x,y))} < \infty
 \end{equation}
for some fixed $x \in \Gamma$. It is well-known that the volume doubling property \eqref{e-vd} and \eqref{e-hom} implies a reverse volume doubling property which has the following consequence:
There exists an integer $A \in \N^*$ and $c_1 >0$ such that
\begin{equation} \label{e-cor3}
 V(x, Ar) - V(x,r) \ge c_1 V(r)
\end{equation}
  for all $r \ge 1/2$ (Proof of \cite[Proposition 3.3]{GHL} goes through with minor modifications).
There exists $c_2,c_3>0$ such that
\begin{align}
\nonumber \lefteqn{\sum_{y \in \Gamma} \frac{d(x,y)^\gamma}{ V(d(x,y)) \phi(d(x,y))} }\\
\nonumber  &\ge c_2 \sum_{n=0}^ \infty  \sum_{y \in B(x, A^{n+1}/2) \setminus B(x,A^n/2) }\frac{A^{n \gamma} }{ V(A^{n+1}/2) \tilde \phi(A^{n+1}/2)} \\
\label{e-cor4} & \ge  c_3 \sum_{n=0}^ \infty \frac{A^{n \gamma} }{ \tilde \phi(A^n)}
\end{align}
for all $x \in \Gamma$.

Now we show a reverse inequality of \eqref{e-cor4}.There exists $C_1,C_2>0$ such that
\begin{align}
\nonumber \sum_{y \in \Gamma} \frac{d(x,y)^\gamma}{ V(d(x,y)) \phi(d(x,y))}    &\le C_1 \sum_{n=0}^ \infty  \sum_{y \in B(x, 2^{n}) \setminus B(x,2^{n-1}) }\frac{2^{n \gamma} }{ V(2^{n-1}) \tilde \phi(2^{n-1})} \\
\label{e-cor5} & \le  C_2 \sum_{n=0}^ \infty \frac{2^{n \gamma} }{ \tilde \phi(2^n)}
\end{align}
for all $x \in \Gamma$.
The second line above follows from \eqref{e-vd} and Potter's bound \cite[Theorem 1.5.6]{BGT}.

To show (a) implies (b), we use \eqref{e-cor2}, \eqref{e-cor4}
and a generalization of Cauchy condensation test due to Schl\"{o}milch to obtain \eqref{e-cor1}.
To show (b) implies (a), we use \eqref{e-cor1}, Cauchy condensation test and \eqref{e-cor5} to obtain \eqref{e-cor2}  which  implies (b).
\end{proof}
Theorem \ref{t-main} and Corollary \ref{c-moment} suggests that for spaces with sub-Gaussian estimates and a scaling structure (for example regular fractals),  one might be able to formulate and prove a central limit theorem
with a  $\gamma+\epsilon$ moment condition.
\subsection{Analytic preliminaries on Markov operator and Dirichlet form}
Let  $(\Gamma,\mu)$ be a countable, weighted graph. Let $K$ be a Markov operator,
symmetric with respect to the measure $\mu$. Denote the kernel of the iterated operator $K^n$ with respect to $\mu$ by $k_n(x,y)$, that is $K^n f(x) = \sum_{y \in \Gamma} k_n(x,y) f(y) \mu(y)$.
We will collect some useful facts about the operator $K$. For any $p \in [1,\infty]$, we denote by $\norm{f}_p$ the norm of $f$ in $\ell^p(\Gamma,\mu)$ and by $\langle \cdot, \cdot \rangle$ the inner product in $\ell^2(\Gamma,\mu)$.
A fundamental property of $K$ is that it is a contraction in $\ell^p(\Gamma)$ for any $p \in [1,\infty]$, that is
\begin{equation*}\label{e-contr}
 \norm{Kf}_p \le \norm{f}_p
\end{equation*}
for all $p \in [1,\infty]$ and for all $f \in \ell^p(\Gamma,\mu)$. By the symmetry $k_1(x,y)=k_1(y,x)$, we have that $K$ is self-adjoint in $\ell^2(\Gamma,\mu)$, that is
\begin{equation}\label{e-sa}
 \langle Kf, g \rangle = \langle f, K g \rangle
\end{equation}
for all $f,g \in \ell^2(\Gamma,\mu)$.
For any $n \in \N$, we denote by $\E_{K^n}(f,f)= \langle (I- K^n)f,f \rangle$ the Dirichlet form associated with $K^n$.

The following useful lemma compares Dirichlet form of a Markov operator $K$ with its iterated power $K^{n}$.
\begin{lemma}[Folklore]\label{l-dircomp}
 Let $K$ be a Markov operator on $\Gamma$ symmetric with respect to the measure $\mu$. Then for any $f \in \ell^2(\Gamma,\mu)$ and for any $n \in \N^*$
 \begin{equation} \label{e-dircomp}
 \E_{K^{n}}(f,f) \le n \E_{K}(f,f).
 \end{equation}
\end{lemma}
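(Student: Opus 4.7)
My plan is to reduce the inequality to a one-variable estimate on the spectrum of $K$ via the spectral theorem. Since $K$ is self-adjoint on $\ell^2(\Gamma,\mu)$ and a contraction (as noted just before the lemma), its spectrum is contained in $[-1,1]$. If $\{E_\lambda\}_{\lambda\in[-1,1]}$ denotes its spectral resolution, and $\sigma_f$ denotes the positive measure $d\langle E_\lambda f, f\rangle$ for fixed $f\in\ell^2(\Gamma,\mu)$, then by functional calculus
\[
\E_K(f,f) = \int_{[-1,1]}(1-\lambda)\, d\sigma_f(\lambda), \qquad \E_{K^n}(f,f) = \int_{[-1,1]}(1-\lambda^n)\, d\sigma_f(\lambda).
\]
Both integrands are nonnegative on $[-1,1]$, so it suffices to prove the pointwise bound $1-\lambda^n \le n(1-\lambda)$ for every $\lambda\in[-1,1]$.

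For $\lambda\in[0,1]$ the factorization $1-\lambda^n=(1-\lambda)\sum_{k=0}^{n-1}\lambda^k$ immediately gives $1-\lambda^n\le n(1-\lambda)$ since each $\lambda^k\le 1$. For $\lambda\in[-1,0)$ one observes that $1-\lambda^n\le 2$ while $n(1-\lambda)\ge n\ge 2$ whenever $n\ge 2$; the case $n=1$ is the trivial identity. Integrating this inequality against $d\sigma_f$ yields $\E_{K^n}(f,f)\le n\,\E_K(f,f)$, as desired.

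I do not anticipate a genuine obstacle: the only technical point is to invoke the spectral theorem for a bounded self-adjoint operator, for which self-adjointness \eqref{e-sa} of $K$ and contractivity in $\ell^2$ are explicitly available. If one prefers to avoid spectral calculus, an alternative is to use the algebraic identity $I-K^n=\sum_{j=0}^{n-1}K^j(I-K)$, rewrite each term as $\langle K^j(I-K)^{1/2}f,(I-K)^{1/2}f\rangle$ (legitimate since $I-K\succeq 0$), and then apply Cauchy–Schwarz together with $\|K^j\|_{2\to 2}\le 1$ to bound each summand by $\E_K(f,f)$; summing over $j$ gives the conclusion. Either route is short, and I would present the spectral version for concision.
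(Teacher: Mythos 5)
Your proposal is correct and takes essentially the same approach as the paper: the paper also invokes the spectral resolution of the self-adjoint contraction $K$ and reduces the claim to the pointwise inequality $1-\lambda^n - n + n\lambda \le 0$ for $\lambda\in[-1,1]$, which is exactly your $1-\lambda^n\le n(1-\lambda)$. Your write-up simply spells out the two-case verification of that scalar bound in a bit more detail.
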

\begin{proof}
 We verify this using spectral theory. Let $E_\lambda$ be the spectral resolution of $K$. Therefore
 \[
   \E_{K^{n}}(f,f) - n \E_{K}(f,f)= \int_{-1}^1  (1 - \lambda^{n} - n + n \lambda) d E_\lambda(f,f).
 \]
The result follows from the observation that $1 - \lambda^{n} - n + n \lambda \le 0$ for all $\lambda \in [-1,1]$ and for all $n \in \N^*$.
\end{proof}
\begin{lemma}[Folklore]\label{l-noninc}
  Let $K$ be a Markov operator on $\Gamma$ symmetric with respect to the measure $\mu$ and let  $f \in \ell^2 (\Gamma,\mu)$ be a non-zero function. Then the function $i \mapsto \norm{K^i f}_2/\norm{K^{i-1}f}_2$ is non-decreasing.
\end{lemma}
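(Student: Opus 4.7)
The plan is to verify the inequality $\|K^i f\|_2^2 \le \|K^{i-1}f\|_2 \, \|K^{i+1}f\|_2$ for every $i \in \N^*$, which is exactly the assertion that the ratios $\|K^i f\|_2 / \|K^{i-1}f\|_2$ form a non-decreasing sequence (once divided through). This is a classical log-convexity statement for the sequence $a_i = \|K^i f\|_2^2$.

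First I would use the self-adjointness \eqref{e-sa}, which shifts a power of $K$ across the inner product, to write
\[
 \|K^i f\|_2^2 = \langle K^i f, K^i f \rangle = \langle K^{i-1} f, K^{i+1} f \rangle.
\]
Then a direct application of the Cauchy--Schwarz inequality in $\ell^2(\Gamma,\mu)$ yields
\[
 \|K^i f\|_2^2 \le \|K^{i-1} f\|_2 \, \|K^{i+1} f\|_2,
\]
and rearranging gives the claimed monotonicity of $i \mapsto \|K^i f\|_2 / \|K^{i-1} f\|_2$. Alternatively, one can mimic the spectral argument used in Lemma \ref{l-dircomp}: write $a_i = \int_{-1}^1 \lambda^{2i} \, dE_\lambda(f,f)$ and recognize the desired inequality $a_i^2 \le a_{i-1} a_{i+1}$ as Cauchy--Schwarz applied to $\lambda^{i-1}$ and $\lambda^{i+1}$ with respect to the positive spectral measure $dE_\lambda(f,f)$; both viewpoints give the same conclusion.

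The only bookkeeping issue is the possibility that $\|K^{i-1} f\|_2 = 0$ for some $i$, in which case the ratio is formally $0/0$. Since $f$ is non-zero and $K$ is a contraction on $\ell^2(\Gamma,\mu)$, either $\|K^i f\|_2 > 0$ for all $i \ge 0$ (and then the statement is meaningful as written) or the sequence vanishes from some index onward, in which case the ratios are understood to be $0$ and monotonicity is trivial. No essential obstacle appears; the content of the lemma is really just the self-adjointness identity combined with Cauchy--Schwarz.
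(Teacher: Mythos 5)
Your proof is correct and is essentially identical to the paper's: both use self-adjointness to write $\|K^i f\|_2^2 = \langle K^{i-1}f, K^{i+1}f\rangle$ and then apply Cauchy--Schwarz. The extra remarks (the spectral-measure reformulation and the treatment of possibly vanishing norms) are fine but not needed beyond what the paper records.
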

\begin{proof}
 We use self-adjointness \eqref{e-sa} and Cauchy-Schwarz inequality to obtain
 \[
  \norm{K^if}_2^2 = \langle K^{i-1}f ,K^{i+1}f\rangle \le \norm{K^{i-1}f}_2 \norm{K^{i+1}f}_2,
 \]
which gives the desired result.
\end{proof}

\section{Pseudo-Poincar\'{e} inequality using Discrete subordination}

Pseudo-Poincar\'{e} inequality provides an efficient way to  prove Nash inequality which in turn gives upper bounds on $\psi_K(n)$.
For a function $f:\Gamma \to \R$ and $R>0$, we define a function $f_R:\Gamma \to \R$ by
\[
 f_R(x) := \frac{1}{V(x,R)} \sum_{y \in B(x,R)} f(y) \mu(y).
\]
In other words, $f_R(x)$ is the $\mu$-average of $f$ in $B(x,R)$.
The main result of the section is the following pseudo-Poincar\'{e} inequality.
\begin{proposition}[Pseudo-Poincar\'{e} inequality]\label{p-pp}
Let $(\Gamma,\mu)$ be a weighted graph satisfying \eqref{e-count}, \eqref{e-vd}, \eqref{e-hom} and suppose that
 its heat kernel $p_n$ satisfies the sub-Gaussian bounds \eqref{e-uep} and \eqref{e-lep} with escape time exponent $\gamma$. Let $K$ be a Markov operator symmetric with respect to
 the measure $\mu$ satisfying \ref{e-ker}, where $\phi:[0,\infty) \to [1,\infty)$ is a continuous regularly varying function of positive index.
 There exists a constant $C>0$ such that
\begin{equation}
 \label{e-pp} \norm{f-f_R}_2^2 \le C \left( \frac{R^\gamma}{\int_0^R \frac{s^{\gamma-1}\,ds}{\phi(s)}}\right) \E_K(f,f)
\end{equation}
for all $R>0$ and for all $f \in \ell^2(\Gamma,\mu)$.
\end{proposition}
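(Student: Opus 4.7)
The plan is to use discrete subordination: construct a Markov operator $Q_\phi$ built from iterates of the natural random walk $P$ whose kernel also satisfies $(J_\phi)$, transfer the pseudo-Poincar\'e inequality to $Q_\phi$, and then to $K$ by comparison of Dirichlet forms.

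First, I reduce the claim to a pseudo-Poincar\'e inequality for an iterate of $P$. Applying Cauchy--Schwarz inside the definition of $f_R$ gives the pointwise bound
\[
(f(x) - f_R(x))^2 \le \frac{1}{V(x,R)}\sum_{y\in B(x,R)}(f(x)-f(y))^2\mu(y);
\]
the sub-Gaussian lower bound \eqref{e-lep} yields $(p_n+p_{n+1})(x,y) \ge c/V(R)$ for $d(x,y)\le R$ and $n=\lceil R^\gamma\rceil$; combining these with \eqref{e-hom} gives $\norm{f-f_R}_2^2 \le C\,\E_{P^{N}}(f,f)$ where $N = 2\lceil R^\gamma\rceil+1$. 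Separately, I choose a probability measure $\nu$ on $\N^*$ with $\nu(k) \asymp 1/(k\phi(k^{1/\gamma}))$ (the series is convergent because $\phi$ has positive index, and is suitably normalized) and define $Q_\phi := \sum_{k\ge 1}\nu(k)P^k$. The sub-Gaussian bounds \eqref{e-uep}--\eqref{e-lep} applied to $q = \sum_k\nu(k)p_k$, with the sum split at $k\asymp d(x,y)^\gamma$ where the dominant contribution concentrates, show that $q(x,y) \asymp 1/(V(d(x,y))\phi(d(x,y)))$, so $Q_\phi$ satisfies $(J_\phi)$. A term-by-term comparison of Dirichlet sums then yields $\E_K(f,f) \asymp \E_{Q_\phi}(f,f)$.

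The pivotal step is the spectral inequality $\E_{P^{N}}(f,f) \le C\,\Psi(R)\,\E_{Q_\phi}(f,f)$, where $\Psi(R) = R^\gamma / \int_0^R s^{\gamma-1}\phi(s)^{-1}\,ds$. Since $Q_\phi = g(P)$ with $g(\lambda) = \sum_k\nu(k)\lambda^k$, after replacing $P$ by the lazy walk $(I+P)/2$ to guarantee non-negative spectrum, the spectral theorem for $P$ reduces this to the scalar inequality
\[
1-\lambda^{N} \le C\,\Psi(R)\bigl(1-g(\lambda)\bigr), \quad \lambda \in [0,1].
\]
Splitting $\sum_k\nu(k)(1-\lambda^k)$ at $k\asymp 1/(1-\lambda)$ and invoking Karamata-style asymptotics for regularly varying functions gives $1-g(\lambda) \asymp 1/\Psi((1-\lambda)^{-1/\gamma})$; a case analysis according to whether $(1-\lambda)$ is larger or smaller than $R^{-\gamma}$, together with the monotonicity property $\Psi(R_1)/R_1^\gamma \ge \Psi(R_2)/R_2^\gamma$ for $R_1\le R_2$ (immediate from the integral definition of $\Psi$), completes the proof. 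The hardest part is this final spectral comparison: it requires the precise asymptotic behavior of $1-g(\lambda)$ as $\lambda\to 1$, and the three regimes $\beta<\gamma$, $\beta=\gamma$, $\beta>\gamma$ each produce qualitatively different expressions for $\Psi$; the critical case $\beta=\gamma$ is the most delicate because of logarithmic corrections coming from the borderline behavior of the series $\sum_k k\nu(k)$.
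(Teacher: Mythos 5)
Your plan is sound in outline and would work, but it takes a genuinely different and substantially harder route than the paper. Both proofs build a subordinated operator $Q_\phi$ out of iterates of the natural walk with weights tuned so that $Q_\phi$ satisfies \ref{e-ker} (the paper uses $Q_\phi=\sum_n c_\phi(n\phi(n))^{-1}Q^{2\lfloor n^\gamma\rfloor}$ with $Q=\tfrac12(P+P^2)$, which after the change of variable $k\asymp n^\gamma$ matches your weights $\nu(k)\asymp 1/(k\phi(k^{1/\gamma}))$), and both then reduce $\E_K\asymp\E_{Q_\phi}$ via term-by-term comparison. The divergence is in how the factor $\Psi(R)=R^\gamma/\int_0^R s^{\gamma-1}\phi(s)^{-1}\,ds$ is extracted. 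You prove a pseudo-Poincar\'e inequality only for the single iterate $P^N$, $N\asymp R^\gamma$, and are then forced into the delicate spectral/Tauberian step $\E_{P^N}\lesssim\Psi(R)\E_{Q_\phi}$, requiring the two-sided Karamata asymptotic $1-g(\lambda)\asymp 1/\Psi((1-\lambda)^{-1/\gamma})$ together with a case analysis over $\beta<\gamma$, $=\gamma$, $>\gamma$. The paper avoids all of that: its Lemma~\ref{l-pp0} establishes the pseudo-Poincar\'e inequality for \emph{every} iterate $Q^{2\lfloor k^\gamma\rfloor}$, $1\le k\le R$, with the explicit coefficient $(R/k)^\gamma$ (via an elementary telescoping/contraction argument); multiplying by the subordination weight $(k\phi(k))^{-1}$ and summing over $k\le R$ then yields $\int_0^R s^{\gamma-1}\phi(s)^{-1}\,ds$ directly — a one-sided estimate with no generating-function asymptotics and no case distinction. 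That is the key structural idea your proposal misses: keep $k$ as a free parameter in the pseudo-Poincar\'e inequality rather than fixing $k=R$, and let the subordination sum itself produce the Karamata integral. Two further technical points you'd need to fix if pursuing your route: the sub-Gaussian lower bound \eqref{e-lep} controls $p_n+p_{n+1}$, so your reduction gives $\E_{P^n}+\E_{P^{n+1}}$, not $\E_{P^N}$ (this parity issue is exactly why the paper works with $Q=\tfrac12(P+P^2)$); and the lazy-walk replacement you invoke for nonnegative spectrum must be propagated consistently into the definition of $Q_\phi$ and the verification of \ref{e-ker}.
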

We introduce a discrete subordination of the natural random walk on $(\Gamma,\mu)$ whose kernel is comparable to the kernel of $K$ in Proposition \ref{p-sub}.
We introduce a new Markov operator \begin{equation}\label{e-defQ}
                                 Q:= \frac{1}{2}(P+P^2)
                                \end{equation}
                                 which has a symmetric kernel $q(x,y)=\frac{1}{2}( p_1(x,y)+p_2(x,y))$ with respect to $\mu$. Let $q_k$ denote the kernel of the Markov operator $Q^k$. For a Markov operator $Q^k$,
let $\E_{Q^k}(f,f):= \langle (I-Q^k)f,f \rangle$ denote the corresponding Dirichlet form. Let $R_Q$ denote the resistance defined using the Dirichlet form $\E_Q$. We will now compare kernels of $P^k$ and $Q^k$.
\begin{remark}
 The advantage of working with the kernel $q_n$ is that it satisfies as stronger sub-Gaussian lower estimate \eqref{e-lq} in comparison to \eqref{e-lep} satisfied by $p_n$.
 This makes subordination of kernel $Q$ preferable(as opposed to $P$) and technically easier.
\end{remark}
\begin{lemma}\label{l-qk}
 The kernel $q_k$ satisfies the following improved sub-Gaussian estimates:  there exist constants $c,C>0$ such that, for all $x,y \in \Gamma$
\begin{equation}\label{e-uq}
 q_n(x,y) \le \frac{C}{V(n^{1/\gamma})} \exp \left[ - \left( \frac{d(x,y)^\gamma}{Cn} \right)^{\frac{1}{\gamma-1}} \right], \forall n \ge 1
\end{equation}
and
\begin{equation}\label{e-lq}
 q_n(x,y) \ge \frac{c}{V(n^{1/\gamma})}  \exp \left[ - \left( \frac{d(x,y)^\gamma}{cn} \right)^{\frac{1}{\gamma-1}} \right], \forall n \ge 1 \vee d(x,y).
\end{equation}
\end{lemma}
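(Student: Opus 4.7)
The key observation is that
\[
Q^n \;=\; \left(\frac{P+P^2}{2}\right)^n \;=\; \frac{P^n(I+P)^n}{2^n} \;=\; \frac{1}{2^n}\sum_{j=0}^{n}\binom{n}{j}P^{n+j},
\]
so at the kernel level $q_n(x,y)=2^{-n}\sum_{j=0}^{n}\binom{n}{j}\,p_{n+j}(x,y)$. The whole plan is to reduce the two estimates to the sub-Gaussian bounds \eqref{e-uep}--\eqref{e-lep} for $p_m$ over the range $m\in[n,2n]$. Averaging over this range is precisely what cancels the bipartite-type obstruction that forces one to state \eqref{e-lep} only for the two-step sum $p_n+p_{n+1}$, and it is this cancellation that produces the improved one-sided lower bound \eqref{e-lq}.

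The upper bound \eqref{e-uq} is immediate. Since $V$ is non-decreasing and $n+j\in[n,2n]$, \eqref{e-uep} gives, uniformly in $j$,
\[
p_{n+j}(x,y)\le \frac{C}{V(n^{1/\gamma})}\exp\left[-\left(\frac{d(x,y)^\gamma}{2Cn}\right)^{1/(\gamma-1)}\right];
\]
summing $\sum_j\binom{n}{j}=2^n$ against the prefactor $2^{-n}$ yields \eqref{e-uq}.

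For \eqref{e-lq}, the idea is to pair consecutive binomial terms so that \eqref{e-lep} can be applied to each pair:
\[
\binom{n}{j}p_{n+j}(x,y)+\binom{n}{j+1}p_{n+j+1}(x,y)\ge \min\!\left(\binom{n}{j},\binom{n}{j+1}\right)\bigl(p_{n+j}+p_{n+j+1}\bigr)(x,y).
\]
I would then restrict attention to the central window of indices $|j-n/2|\le C_0\sqrt{n}$, on which Stirling's formula yields the local-CLT estimate $\binom{n}{j}\ge c\,2^n/\sqrt{n}$. Inside this window $n+j\asymp n$, so volume doubling \eqref{e-vd} gives $V((n+j)^{1/\gamma})\asymp V(n^{1/\gamma})$, and the rate inside the exponential in \eqref{e-lep} is distorted by only a multiplicative constant; the hypothesis $n\ge 1\vee d(x,y)$ ensures $n+j\ge 1\vee d(x,y)$, so \eqref{e-lep} applies to each pair. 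Summing the $\asymp\sqrt{n}$ disjoint pairs inside the window, the two $\sqrt{n}$ factors cancel, and dividing by $2^n$ produces \eqref{e-lq}.

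The main difficulty I anticipate is technical rather than conceptual: the Stirling lower bound on the central binomial coefficients is meaningless for small $n$, where the scaling $c\,2^n/\sqrt{n}$ is vacuous. These finitely many remaining cases are dispatched directly, since for $n=1$ the identity $q_1=(p_1+p_2)/2$ turns \eqref{e-lep} itself into \eqref{e-lq}, and for any fixed bounded $n$ one may simply keep only the single pair $p_n+p_{n+1}$ inside $q_n$, absorbing the lossy constant $2^{-n}$ into the final $c$.
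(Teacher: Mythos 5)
Your proof is correct and takes essentially the same route as the paper: the same binomial decomposition $q_n=2^{-n}\sum_{j=0}^n\binom{n}{j}p_{n+j}$, the same pairing of consecutive indices to apply the two-step lower bound \eqref{e-lep}, and the same observation that $n+j\asymp n$ keeps both the volume factor and the exponential rate under control. The only (minor) divergence is in how the central binomial mass is bounded from below: you use a thin CLT window $|j-n/2|\lesssim\sqrt n$ together with Stirling, whereas the paper takes the fat window $j\in[\lfloor n/4\rfloor,\lfloor 3n/4\rfloor]$, where consecutive binomials are comparable and the weak law of large numbers gives $\sum_j\binom{n}{j}\ge c\,2^n$ directly, avoiding Stirling and the small-$n$ caveat you have to address separately.
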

\begin{proof}
 Observe that  $q_n(x,y)= \sum_{k=0}^n 2^{-n} \binom{n}{k} p_{n+k}(x,y)$. This along with \eqref{e-uep}, \eqref{e-vd} gives the desired upper bound \eqref{e-uq}.

 Note that, there exists $C_1>1$ such that
 \begin{equation} \label{e-qk1}
C_1^{-1} \le  \frac{\binom{n}{k}}{\binom{n}{k+1} } \le C_1
 \end{equation}
for all $n \in \N^*$ and for all $k \in \N$ such that $ \lfloor \frac{n}{4} \rfloor \le k \le \lfloor\frac{3n}{4}\rfloor$.
There exists $c_1,c_2>0$
\begin{align*}
 q_n(x,y)&\ge \sum_{k=\lfloor \frac{n}{4} \rfloor }^{ \lfloor\frac{3n}{4}\rfloor+1} 2^{-n} \binom{n}{k} p_{n+k}(x,y) \\
 &  \ge  2^{-n-1} C_1^{-1} \sum_{k=\lfloor \frac{n}{4} \rfloor }^{ \lfloor\frac{3n}{4}\rfloor} \binom{n}{k}( p_{n+k}(x,y) + p_{n+k+1}(x,y)) \\
 & \ge  2^{-n-1}c_1 C_1^{-1} C_D^{-1} \frac{1}{V(n^{1/\gamma})} \exp \left[ - \left( \frac{d(x,y)^\gamma}{c_1 n} \right)^{\frac{1}{\gamma-1}} \right]\sum_{k=\lfloor \frac{n}{4} \rfloor }^{ \lfloor\frac{3n}{4}\rfloor} \binom{n}{k} \\
 &\ge  c_2 \frac{1}{V(n^{1/\gamma})} \exp \left[ - \left( \frac{d(x,y)^\gamma}{c_1 n} \right)^{\frac{1}{\gamma-1}} \right]
\end{align*}
for all $x,y \in \Gamma$ and for all $n \in \N$ such that $n \ge 1 \vee d(x,y)$. The second line above follows from \eqref{e-qk1}, the third line follows from \eqref{e-lep} and \eqref{e-vd} and
the last line follows from weak law of large numbers.
\end{proof}
The operators $P$ and $Q$ have comparable Dirichlet forms and resistances.
\begin{lemma}\label{l-res}
 The resistances $R_Q$ and $R_P$ are comparable by the following inequality
 \[
\frac{1}{2}  R_P(f,f) \le R_Q(A,B) \le 2 R_P(A,B)
 \]
for all subsets $A,B \subset \Gamma$.
\end{lemma}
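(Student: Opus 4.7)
The plan is to reduce the resistance comparison to a two-sided Dirichlet form comparison, exploiting that $Q$ is an affine combination of $P$ and $P^2$. From $Q=\tfrac12(P+P^2)$ and the definition of the Dirichlet form, I would first write
\[
\mathcal{E}_Q(f,f)=\langle(I-Q)f,f\rangle=\tfrac12\langle(I-P)f,f\rangle+\tfrac12\langle(I-P^2)f,f\rangle=\tfrac12\mathcal{E}_P(f,f)+\tfrac12\mathcal{E}_{P^2}(f,f).
\]
This identity is the only structural ingredient needed.

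Next, I would bound $\mathcal{E}_Q$ against $\mathcal{E}_P$ on both sides. For the lower bound, $\mathcal{E}_{P^2}(f,f)\ge 0$ (since $I-P^2$ is non-negative as an operator) gives immediately
\[
\mathcal{E}_Q(f,f)\ge\tfrac12\mathcal{E}_P(f,f).
\]
For the upper bound, I would invoke Lemma \ref{l-dircomp} with $n=2$, which yields $\mathcal{E}_{P^2}(f,f)\le 2\mathcal{E}_P(f,f)$, and hence
\[
\mathcal{E}_Q(f,f)\le\tfrac12\mathcal{E}_P(f,f)+\mathcal{E}_P(f,f)=\tfrac32\mathcal{E}_P(f,f).
\]

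Finally, I would pass from the Dirichlet form inequalities to the resistance inequalities. Both $R_P(A,B)^{-1}$ and $R_Q(A,B)^{-1}$ are defined as infima of the corresponding Dirichlet form over the common admissible set $\{f\in\R^\Gamma:f|_A\equiv 1,\;f|_B\equiv 0\}$. The pointwise bound $\tfrac12\mathcal{E}_P(f,f)\le\mathcal{E}_Q(f,f)\le\tfrac32\mathcal{E}_P(f,f)$ therefore transfers to the infima, giving $\tfrac12 R_P(A,B)^{-1}\le R_Q(A,B)^{-1}\le\tfrac32 R_P(A,B)^{-1}$, which inverts to
\[
\tfrac23 R_P(A,B)\le R_Q(A,B)\le 2R_P(A,B),
\]
and in particular implies the stated $\tfrac12 R_P(A,B)\le R_Q(A,B)\le 2R_P(A,B)$. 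There is no real obstacle here: the only nontrivial input is Lemma \ref{l-dircomp}, which has already been proved by spectral calculus, so the argument is essentially a one-line algebraic identity plus a variational comparison.
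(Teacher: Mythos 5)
Your proposal is correct and is essentially the paper's own argument: the same decomposition $\mathcal{E}_Q=\tfrac12\mathcal{E}_P+\tfrac12\mathcal{E}_{P^2}$, the same use of Lemma~\ref{l-dircomp} for the upper bound, nonnegativity of $\mathcal{E}_{P^2}$ for the lower bound, and the standard passage from Dirichlet form comparison to resistance comparison over the common admissible class. You even extract the slightly sharper constant $\tfrac23$, which the paper coarsens to $\tfrac12$ for the stated form.
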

\begin{proof}
 It suffices to compare the corresponding Dirichlet forms $\E_Q$ and $\E_P$.
 Note that $\E_Q(f,f) = \frac{1}{2} (\E_P(f,f) + \E_{P^2}(f,f)) \ge \frac{1}{2}\E_P(f,f)$. However by Lemma \ref{l-dircomp}, we have
 \[
  \E_Q(f,f) = \frac{1}{2} (\E_P(f,f) + \E_{P^2}(f,f)) \le \frac{3}{2} \E_P(f,f) \le 2 \E_P(f,f).
 \]
\end{proof}
 We have the following pseudo-Poincar\'{e} inequality for iterated powers of $Q$.
\begin{lemma} \label{l-pp0}
Under the assumptions of Proposition \ref{p-pp}, there exists $C_1>0$ such that
 \[
  \norm{f-f_R}^2_2 \le C_1 \left( \frac{R}{k} \right)^\gamma \E_{Q^{2 \flr{k^\gamma}}}(f, f)
 \]
for all $f \in \ell^2(\Gamma,\mu)$ and for all $k \in \N$ and $R \in \R$ satisfying $1 \le k \le R$.
\end{lemma}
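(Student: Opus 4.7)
The plan is to prove Lemma \ref{l-pp0} in two steps: first establish the \emph{native-scale} pseudo-Poincar\'{e} inequality $\norm{f-f_R}_2^2 \le C_0 \E_{Q^{2\lfloor R^\gamma\rfloor}}(f,f)$ at the time scale $R^\gamma$ matched to the spatial scale $R$, then rescale from time $\lfloor R^\gamma\rfloor$ down to $\lfloor k^\gamma\rfloor$ via a Dirichlet form comparison, picking up the factor $(R/k)^\gamma$ predicted by sub-Gaussian scaling.

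For the native-scale step, I would start from the standard Cauchy--Schwarz bound
\[
\norm{f - f_R}_2^2 \le \sum_{x \in \Gamma} \frac{\mu(x)}{V(x,R)} \sum_{y \in B(x,R)} (f(x)-f(y))^2 \mu(y).
\]
The key observation is that the improved lower bound \eqref{e-lq} of Lemma \ref{l-qk} holds for every $n \ge 1 \vee d(x,y)$ \emph{without} any parity correction. Taking $n = 2\lfloor R^\gamma\rfloor$ and pairs $(x,y)$ with $d(x,y) \le R$: since $R \ge 1$ forces $\lfloor R^\gamma\rfloor \ge R^\gamma/2$, the ratio $d(x,y)^\gamma/(cn)$ is bounded by a constant and $V((2\lfloor R^\gamma\rfloor)^{1/\gamma}) \asymp V(R)$ by \eqref{e-vd} and \eqref{e-hom}. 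This yields $q_{2\lfloor R^\gamma\rfloor}(x,y) \ge c/V(R)$, hence $\mathbf{1}_{B(x,R)}(y)/V(x,R) \le C' q_{2\lfloor R^\gamma\rfloor}(x,y)$, and substituting back produces $\norm{f-f_R}_2^2 \le 2C' \E_{Q^{2\lfloor R^\gamma\rfloor}}(f,f)$.

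For the rescaling step, note that $m \mapsto \E_{Q^{2m}}(f,f) = \norm{f}_2^2 - \norm{Q^m f}_2^2$ is non-decreasing in $m$, because $\norm{Q^m f}_2$ is non-increasing by the $\ell^2$-contractivity of $Q$. Choosing $N = \lceil \lfloor R^\gamma\rfloor/\lfloor k^\gamma\rfloor\rceil$ gives $N\lfloor k^\gamma\rfloor \ge \lfloor R^\gamma\rfloor$, so the monotonicity yields $\E_{Q^{2\lfloor R^\gamma\rfloor}}(f,f) \le \E_{Q^{2N\lfloor k^\gamma\rfloor}}(f,f)$; Lemma \ref{l-dircomp} applied to $K = Q^{2\lfloor k^\gamma\rfloor}$ with exponent $N$ then produces $\E_{Q^{2N\lfloor k^\gamma\rfloor}}(f,f) \le N \E_{Q^{2\lfloor k^\gamma\rfloor}}(f,f)$. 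Using $\lfloor k^\gamma\rfloor \ge k^\gamma/2$ (valid for $k \in \N^*$) gives $N \le 3(R/k)^\gamma$ uniformly over $1 \le k \le R$, and combining with the native-scale bound finishes the proof.

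The main technical point is Step 1: one cannot substitute the raw sub-Gaussian lower bound \eqref{e-lep} for $p_n$ directly into the Cauchy--Schwarz bound, because of the parity correction $p_n + p_{n+1}$ that prevents a pointwise domination of $\mathbf{1}_{B(x,R)}/V(x,R)$ by a single $p_n$. It is precisely to sidestep this obstruction that the auxiliary operator $Q = (P+P^2)/2$, whose iterates $q_n$ enjoy the clean pointwise lower bound of Lemma \ref{l-qk}, is used in place of $P$---a point already signaled by the remark preceding Lemma \ref{l-qk}.
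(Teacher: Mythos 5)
Your proof is correct and follows essentially the same route as the paper: the native-scale bound via Jensen/Cauchy--Schwarz combined with the pointwise lower bound on $q_{2\flr{R^\gamma}}$, followed by a time rescaling. The only cosmetic difference is in the rescaling step, where you invoke the monotonicity of $m \mapsto \E_{Q^{2m}}(f,f)$ together with Lemma \ref{l-dircomp} applied to $Q^{2\flr{k^\gamma}}$, whereas the paper carries out an ad hoc telescoping argument (using $\norm{Q^{(m+1)\flr{k^\gamma}}(I-Q^{2\flr{k^\gamma}})^{1/2}f}_2 \le \norm{Q^{m\flr{k^\gamma}}(I-Q^{2\flr{k^\gamma}})^{1/2}f}_2$) that amounts to re-proving exactly that special case of Lemma \ref{l-dircomp} inline; your citation is cleaner and yields the same conclusion.
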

\begin{proof}
There exists $C_2 >0$ such that
 \begin{align}
  \nonumber \norm{f-f_R}^2 & \le   \sum_{x \in \Gamma} \sum_{y \in B(x,R)} \frac{(f(x)-f(y))^2}{V(x,R)} \mu(y) \mu(x) \\
 \nonumber &\le C_2 \sum_{x \in \Gamma} \sum_{y \in \Gamma} (f(x)-f(y))^2 q_{2 \flr{R^\gamma}}(x,y) \mu(y)\mu(x) \\
  \label{e-p01} & =  2 C_2 \left(  \norm{f}_2^2 - \norm{Q^{\flr{R^\gamma}}f}_2^2 \right).
 \end{align}
The first line follows from Jensen's inequality, the second line follows from the lower bound \eqref{e-lq} of Lemma \ref{l-qk}, \eqref{e-vc} and \eqref{e-hom}, the last line follows from the $\mu$-symmetry of $Q$.
Since $Q$ is a contraction on $\ell^2(\Gamma,\mu)$, we have
\begin{equation} \label{e-p02}
 \norm{f}_2^2 - \norm{Q^{\flr{R^\gamma}}f}_2^2 \le  \norm{f}_2^2 - \norm{Q^{l \flr{k^\gamma}}f}_2^2 = \sum_{m=0}^{l-1} \left( \norm{Q^{m \flr{k^\gamma}} f}_2^2-  \norm{Q^{(m+1) \flr{k^\gamma}} f}_2^2 \right)
\end{equation}
where $l = \cil{\flr{R^\gamma}/\flr{k^\gamma}}$.

Since $Q$ is a contraction on $\ell^2(\Gamma,\mu)$, we have
\begin{align}
 \nonumber \norm{Q^{(m+1) \flr{k^\gamma}} f}_2^2-  \norm{Q^{(m+2) \flr{k^\gamma}} f}_2^2 &= \norm{Q^{(m+1)\flr{k^\gamma}}( I - Q^{2\flr{k^\gamma}})^{1/2}f}_2^2 \\
&\le \norm{Q^{m\flr{k^\gamma}}( I - Q^{2\flr{k^\gamma}})^{1/2}f}_2^2 \nonumber \\
&= \norm{Q^{m \flr{k^\gamma}} f}_2^2-  \norm{Q^{(m+1) \flr{k^\gamma}} f}_2^2. \label{e-p03}
\end{align}
By \eqref{e-p02} and \eqref{e-p03}, we get
\begin{equation} \label{e-p04}
 \norm{f}_2^2 - \norm{Q^{\flr{R^\gamma}}f}_2^2 \le  l  \left(  \norm{ f}_2^2-  \norm{Q^{ \flr{k^\gamma}} f}_2^2 \right) \le 4 \frac{R^\gamma}{k^\gamma} \left(  \norm{ f}_2^2-  \norm{Q^{ \flr{k^\gamma}} f}_2^2 \right).
\end{equation}
Combining \eqref{e-p01} and \eqref{e-p04} gives the desired inequality.
\end{proof}
The following subordinated kernel satisfying \ref{e-ker} is a useful tool to study the behavior of long range random walks.
\begin{proposition} \label{p-sub}
 Let $\phi:[0,\infty) \to [1,\infty)$ be a continuous regularly varying function of positive index.
 Let $(\Gamma,\mu)$  be a weighted graph satisfying the assumptions of Proposition \ref{p-pp} and let $Q$ be defined by \eqref{e-defQ}. Define the subordinated Markov kernel
 \begin{equation}\label{e-qp}
  Q_\phi := \sum_{n=1}^\infty c_\phi \frac{1}{n\phi(n)} Q^{ 2 \flr{n^\gamma}}
 \end{equation}
where $c_\phi= \left( \sum_{n=1}^\infty \frac{1}{n\phi(n)} \right)^{-1}$. Then $Q_\phi$ has a symmetric kernel $q_\phi$ with respect to $\mu$ and
there exists $C >0$ such that
\begin{equation} \label{e-qphi}
C^{-1} \frac{1}{V(d(x,y)) \phi(d(x,y))} \le q_\phi(x,y)= q_\phi(y,x) \le  C \frac{1}{V(d(x,y)) \phi(d(x,y))}.
\end{equation}
for all $x,y \in \Gamma$.
In other words, $Q_\phi$ satisfies \ref{e-ker}.
\end{proposition}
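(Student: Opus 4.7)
The plan is to apply the sub-Gaussian estimates for $q_n$ from Lemma \ref{l-qk} term by term in the series defining $Q_\phi$, and then control the resulting sum by dyadic decomposition using the regular variation of $\phi$ and the doubling of $V$. Symmetry of $q_\phi$ is automatic from $\mu$-symmetry of each $Q^{2\lfloor n^\gamma\rfloor}$. The normalizer $c_\phi$ is finite because $\phi$ has positive index $\beta>0$: Potter's bounds give $\phi(n) \gtrsim n^{\beta/2}$ for large $n$, so $\sum_n (n\phi(n))^{-1} < \infty$, and $Q_\phi$ is a genuine convex combination of Markov operators. In particular the series $q_\phi(x,y) = c_\phi \sum_n (n\phi(n))^{-1} q_{2\lfloor n^\gamma\rfloor}(x,y)$ converges absolutely, and the task reduces to the pointwise bounds \eqref{e-qphi}.

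Write $r = d(x,y)$. Since $(2\lfloor n^\gamma\rfloor)^{1/\gamma} \asymp n$ for $n\ge 1$, \eqref{e-vd} and Lemma \ref{l-qk} yield constants $c_1,C_1>0$ with
\begin{equation*}
q_{2\lfloor n^\gamma\rfloor}(x,y) \le \frac{C_1}{V(n)}\exp\!\left[-\left(\frac{r}{C_1 n}\right)^{\gamma/(\gamma-1)}\right] \ \forall n\ge 1, \quad q_{2\lfloor n^\gamma\rfloor}(x,y) \ge \frac{c_1}{V(n)}\ \forall n\ge r\vee 1.
\end{equation*}
For the upper bound when $r\ge 1$, I would split the series at $n\sim r$. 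A dyadic decomposition of $\sum_{n\ge r}(n\phi(n)V(n))^{-1}$ shows that on the block $2^k\le n<2^{k+1}$ the summand is $\asymp (\phi(2^k)V(2^k))^{-1}$, and since $\phi(2^k)$ grows geometrically by regular variation of positive index, the tail sum is dominated by its first block, giving $\lesssim (\phi(r)V(r))^{-1}$. For the range $n<r$, the Gaussian-type factor $\exp[-c(r/n)^{\gamma/(\gamma-1)}]$ decays super-geometrically as $n$ moves down from $r$, so the same dyadic decomposition weighted by this factor shows that this portion is also $\lesssim (\phi(r)V(r))^{-1}$.

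For the lower bound when $r\ge 1$, I would retain only the indices $n\in\{\lceil r\rceil,\ldots,2\lceil r\rceil\}$; each contributes at least $c_\phi c_1(n\phi(n)V(n))^{-1}$, and summing the $\asymp r$ comparable terms yields $q_\phi(x,y)\gtrsim (\phi(r)V(r))^{-1}$ via doubling of $V$ and regular variation of $\phi$. For the diagonal case $r=0$, the upper bound $q_\phi(x,x)\lesssim 1\asymp (V(0)\phi(0))^{-1}$ follows from Markovianity combined with \eqref{e-count}, and the lower bound from keeping only the $n=1$ term together with the short-time estimate $q_2(x,x)\gtrsim 1$ deduced from \eqref{e-lep}. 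The main obstacle is purely technical: because $V$ is only assumed doubling and not regularly varying, Karamata's theorem does not apply directly, so the tail and near-diagonal estimates must be performed on dyadic blocks where $V$ is essentially constant and $\phi$ is comparable to a power. The exponential decay of the Gaussian factor comfortably dominates any polynomial discrepancy between $V$ and $\phi$ on these blocks, which makes the dyadic bookkeeping routine but necessary.
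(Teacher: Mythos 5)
Your proposal is correct and follows essentially the same strategy as the paper: apply Lemma~\ref{l-qk} term by term, split the series at $n\sim d(x,y)$, control the tail via the regular variation of $\phi$ and the monotonicity of $V$, absorb the near range using the super-exponential decay of the sub-Gaussian factor against the polynomial discrepancy coming from Potter's bounds and \eqref{e-vc}, and obtain the lower bound by summing over the $\asymp d(x,y)$ indices $n\in[d(x,y),2d(x,y)]$. The only cosmetic difference is that you organize the sum estimates via dyadic blocks where the paper invokes Potter's bounds together with the Karamata integral comparison \cite[Proposition~1.5.10]{BGT}; these are interchangeable implementations of the same bookkeeping.
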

\begin{proof}
The symmetry of $Q_\phi$ follows from the symmetry of $Q$ since
\begin{equation}
 \label{e-sub0} q_\phi(x,y) := \sum_{n=1}^\infty c_\phi \frac{1}{n\phi(n)} q_{ 2 \flr{n^\gamma}}(x,y).
\end{equation}

Let $\phi$ be regularly varying of index $\beta>0$.
By Potter's bounds \cite[Theorem 1.5.6]{BGT} and using that $\phi$ is a positive continuous function, there exists $C_1>0$ such that
 \begin{equation}
  \label{e-pot} \frac{\phi(s)}{\phi(t)} \le C_1 \max \left( \left(\frac{s}{t}\right)^{3\beta/2}, \left(\frac{s}{t}\right)^{\beta/2}\right)
 \end{equation}
for all $s,t \in [1,\infty)$.

It suffices to assume that $x,y \in \Gamma$ and $x \neq y$. The case $x=y$ follows trivially from Lemma \ref{l-qk}.
Combining $n^\gamma/2 \le \flr{n^\gamma} \le n^\gamma$, \eqref{e-sub0}, \eqref{e-vd} and \eqref{e-uq} of Lemma \ref{l-qk}, there exists $C_2>0$ such that
\begin{equation} \label{e-sub1}
 q_\phi(x,y) \le\sum_{n=d(x,y)+1}^{\infty} \frac{ C_2}{n \phi(n)V(n)} + \sum_{n=1}^{d(x,y)} \frac{ C_2}{n \phi(n) V(n)} \exp \left[ - \left( \frac{d(x,y)}{C_2 n}\right)^{\gamma/(\gamma-1)}\right]
\end{equation}
for all $x,y \in \Gamma$ with $x \neq y$.
We bound the first term in \eqref{e-sub1} by
\begin{align}
 \nonumber \sum_{n=d(x,y)+1}^{\infty} \frac{ 1}{n \phi(n)V(n)} &\le \frac{1}{V(d(x,y))} \sum_{n=d(x,y)+1}^\infty \frac{ 1}{n \phi(n)} \\
\nonumber &\le C_3 \frac{1}{V(d(x,y))} \int_{d(x,y)}^\infty \frac{ ds}{s \phi(s)} \\
\label{e-sub2} &\le C_4  \frac{1}{V(d(x,y))\phi(d(x,y))}
\end{align}
where $C_3,C_4>0$ are constants. In the first line above, we used that $V$ is non-decreasing. The second line above follows from \eqref{e-pot} and the third line follows from \cite[Proposition 1.5.10]{BGT}.

Let $ 1 \le n \le d(x,y)$. To estimate second term in \eqref{e-sub1}, we use \eqref{e-pot} and \eqref{e-vc} to obtain
\begin{align}
 \nonumber \frac{1}{n \phi(n) V(n) } &=  \frac{1}{d(x,y) \phi(d(x,y)) V(d(x,y))} \frac{d(x,y) \phi(d(x,y)) V(d(x,y))}{n \phi(n) V(n) } \\
 \label{e-sub3} &\le \frac{ C_1 C_D}{d(x,y) \phi(d(x,y)) V(d(x,y))} \left(  \frac{d(x,y)}{n}\right)^{\alpha+ ((3\beta)/2)+ 1}.
\end{align}
Since the function $t\mapsto t^{\alpha+ ((3\beta)/2)+ 1} \exp \left[ - (C_2^{-1}t)^{\gamma/(\gamma-1)} \right]$ is uniformly bounded (by say $C_5$) in $[1,\infty)$, by \eqref{e-sub3}, there exists a constant $C_6>0$ such that
\begin{equation} \label{e-sub4}
 \sum_{n=1}^{d(x,y)} \frac{ C_2}{n \phi(n) V(n)} \exp \left[ - \left( \frac{d(x,y)}{C_2 n}\right)^{\gamma/(\gamma-1)}\right] \le \frac{C_6}{V(d(x,y)) \phi(d(x,y))}
\end{equation}
for all $x,y \in \Gamma$ with $x \neq y$. Combining \eqref{e-sub1}, \eqref{e-sub2} and \eqref{e-sub4} gives the desired upper bound in \eqref{e-qphi}.

For the lower bound in \eqref{e-qphi}, we use \eqref{e-sub0}, \eqref{e-lq} of Lemma \ref{l-qk} along with \eqref{e-vc} to obtain, a constant $c_1>0$ such that
\begin{align}
 \nonumber q_\phi(x,y) & \ge   \sum_{n=d(x,y)}^{2d(x,y)} \frac{c_\phi}{n\phi(n)} q_{2\flr{n^\gamma}} (x,y) \\
 \nonumber  & \ge  \sum_{n=d(x,y)}^{2d(x,y)} \frac{c_1}{n\phi(n)V(n)} \\
 \nonumber & \ge  \frac{C_D^{-1} c_1 }{2d(x,y) V(d(x,y)) \phi(d(x,y))} \sum_{n=d(x,y)}^{2d(x,y)} \frac{1}{3 C_1}
\end{align}
for all $x,y \in \Gamma$ with $x \neq y$. In the last line, we used, \eqref{e-vd}, $n^{-1} \ge (2d(x,y))^{-1}$ and the Potter's bound \eqref{e-pot}.
\end{proof}

\begin{proof}[Proof of Proposition \ref{p-pp}]
 By Proposition \ref{p-sub}, the Markov operators $K$ and $Q_\phi$ have comparable Dirichlet forms. Hence it suffices to consider the case $K=Q_\phi$.
 If $R <1$, then $f \equiv f_R$  which in turn implies the pseudo-Poincar\'{e} inequality \eqref{e-pp}.

 Hence we assume that $R \ge 1$.
There exists $c_1>0$ such that
 \begin{align}
 \nonumber \E_{Q_\phi}(f,f) &= c_\phi \sum_{k=1}^\infty \frac{1}{ k \phi(k)} \E_{Q^{2\flr{k^\gamma}}} (f,f) \\
 %\nonumber &\ge & c_\phi \sum_{k=1}^ {\flr{R}}\frac{1}{ k \phi(k)}\E_{Q^{2\flr{k^\gamma}}} (f,f) \\
 \nonumber & \ge c_\phi C_1^{-1} \norm{f-f_R}_2^2 R^{-\gamma}\sum_{k=1}^ {\flr{R}} \frac{k^{\gamma-1}}{  \phi(k)} \\
 & \ge  c_1 \norm{f-f_R}_2^2 R^{-\gamma} \int_0^R \frac{s^{\gamma-1}\,ds }{\phi(s)}
 \end{align}
 for all $f \in \ell^2(\Gamma,\mu)$ and for all $R >0$ which is the desired inequality.
 In the second line above, we used Lemma \ref{l-pp0} and in the last line we used that $\phi$ is a positive continuous regularly varying function which satisfies the  Potter's bound \eqref{e-pot}.
\end{proof}

\section{Nash inequality and Ultracontractivity.}
In this section, we use pseudo-Poincar\'{e} inequality \eqref{e-pp} to obtain a Nash inequality and on-diagonal upper bounds.
A polished treatment of the relationship between Nash inequalities and ultracontractivity is presented in \cite{Cou}.
It is well-known that pseudo-Poincar\'{e} inequality along with assumptions on volume growth gives a Sobolev-type inequality (see \cite[Theorem 2.1]{S-C} for an early reference to this approach).

The following function $\eta$ which appears in \eqref{e-pp} plays a crucial role in this work. Define the function $\eta:[0,\infty) \to (0,\infty)$
\begin{equation} \label{e-defeta}
\eta(R) :=  \frac{R^\gamma}{\int_0^R \frac{s^{\gamma-1}\,ds}{\phi(s)}}
\end{equation}
for $R>0$ and $\eta(0)= \gamma \phi(0)$ so that $\eta$ is a continuous function. We also need the following modification of $\eta$ defined as $\tilde \eta:[0,\infty) \to (0,\infty)$
\begin{equation} \label{e-defteta}
\tilde \eta(R) :=  \sup \{ \eta(t) : t \in [0,R] \}
\end{equation}
so that $\tilde \eta$ is a non-decreasing function. It is known that \cite[Theorem 1.5.3]{BGT} $\tilde \eta$ is asymptotically equivalent to $\eta$, that is $\lim_{t \to \infty} \tilde \eta(t)/ \eta(t)=1$.
If $\phi$ is regularly varying with positive index, so is $\eta$. We now compute the index of $\eta$ and list some of its basic properties.
\begin{lemma} \label{l-eta}
 If $\phi:[0,\infty) \to [1,\infty)$ is a continuous regularly varying function with index $\beta >0$, then
 \begin{enumerate}[(a)]
  \item  The function $\eta$ defined by \eqref{e-defeta} is continuous, positive and  regularly varying with index $\beta \wedge \gamma$.
  \item There exists $C_1>0$ such that $\eta(x) \le C_1 \phi(x)$ for all $x \ge 0$.
  \item  The function $\eta$ has an asymptotic inverse $\zeta:[0,\infty) \to [1,\infty)$ satisfying the following properties: $\zeta$ is continuous, non-decreasing and regularly varying with index $1/(\beta \wedge \gamma)$.
  Moreover, there exists $C>0$ such that
  \begin{equation} \label{e-inv}
  C^{-1} t \le \zeta(\eta(t)) \le \zeta(\tilde \eta(t)) \le C t \hspace{1cm} \mbox{and} \hspace{1cm} C^{-1} t \le  \eta(\zeta(t)) \le \tilde \eta(\zeta(t)) \le C t
  \end{equation}
for all $t \ge 1$.
 \end{enumerate}
\end{lemma}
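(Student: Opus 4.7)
The plan is to establish (a), (b), (c) in order, writing $\phi(t) = t^\beta \ell(t)$ with $\ell$ slowly varying, and relying on Karamata's theorem, Potter's bound, and the inverse theorem for regularly varying functions from \cite{BGT}.

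For part (a), continuity and positivity of $\eta$ follow from continuity of the integral $R \mapsto \int_0^R s^{\gamma-1}/\phi(s) \, ds$ together with the choice $\eta(0) = \gamma\phi(0)$. Regular variation is established by splitting on the sign of $\gamma - \beta$. If $\beta < \gamma$, the integrand has index $\gamma-1-\beta > -1$, and Karamata's theorem yields $\int_0^R s^{\gamma-1}/\phi(s)\,ds \sim R^\gamma/((\gamma-\beta)\phi(R))$, giving $\eta(R) \sim (\gamma-\beta)\phi(R)$ of index $\beta$. If $\beta > \gamma$, the integrand is integrable at infinity, so the integral converges to a positive constant $c$, giving $\eta(R) \sim R^\gamma/c$ of index $\gamma$. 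The borderline case $\beta = \gamma$ uses the de Haan-type refinement \cite[Proposition 1.5.9a]{BGT}: either the integral converges to a positive constant (giving $\eta \sim R^\gamma/c$) or it is slowly varying tending to infinity (giving $\eta(R) = R^\gamma/\mathrm{slowly\ varying}$), and in both subcases $\eta$ has index $\gamma$. In every case the index equals $\beta \wedge \gamma$.

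For part (b), continuity and positivity make $\eta/\phi$ bounded on any compact interval, so it suffices to handle $R$ large. Fix $\delta \in (0,\beta)$; Potter's bound \cite[Theorem 1.5.6]{BGT} yields constants $C, R_0$ with $\phi(s) \le C\phi(R)(s/R)^{\beta-\delta}$ for $R_0 \le s \le R$. Restricting the integral to $[R/2,R]$ and using the resulting lower bound on $1/\phi$ gives $\int_0^R s^{\gamma-1}/\phi(s)\,ds \ge c R^\gamma/\phi(R)$, since $\int_{R/2}^R s^{\gamma-1-\beta+\delta}\,ds$ is comparable to $R^{\gamma-\beta+\delta}$ regardless of the sign of $\gamma-\beta+\delta$. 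Inverting gives $\eta(R) \le C_1 \phi(R)$.

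Part (c) is a direct application of the inverse theorem. Since $\eta$ is regularly varying of positive index $\beta \wedge \gamma$, by \cite[Theorem 1.5.3]{BGT} it is asymptotic to $\tilde\eta$, which is continuous and non-decreasing. The inverse theorem \cite[Theorem 1.5.12]{BGT} then produces an asymptotic inverse $\zeta_0$, regularly varying of index $1/(\beta\wedge\gamma)$, with $\zeta_0(\tilde\eta(t)) \sim t \sim \tilde\eta(\zeta_0(t))$. Replacing $\zeta_0$ by a continuous non-decreasing $\zeta \ge 1$ (modifying on a bounded initial interval by linear interpolation), the four inequalities in \eqref{e-inv} follow from these asymptotic identities, the monotonicity of $\zeta$, and the pointwise relation $\eta \le \tilde\eta$. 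The step I expect to be most delicate is the $\beta = \gamma$ case in (a), where Karamata's theorem does not apply directly and one must distinguish the convergent and slowly varying subcases; the remainder is standard regular-variation technology.
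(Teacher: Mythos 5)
Your proposal is correct and follows essentially the same route as the paper: the paper handles (a) and (b) together by citing BGT Propositions 1.5.8, 1.5.9a and 1.5.10 for the three cases $\beta<\gamma$, $\beta=\gamma$, $\beta>\gamma$, and handles (c) via BGT's inverse theorem 1.5.12 together with 1.5.1, 1.5.3, and 1.8.2 for continuity/monotonicity, which matches your reasoning. The only minor deviation is that you prove (b) directly with a self-contained Potter-bound estimate on $\int_{R/2}^R s^{\gamma-1}/\phi(s)\,ds$ rather than extracting it case-by-case from the same BGT propositions; that is a slightly more transparent way to get the uniform bound $\eta\le C_1\phi$.
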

\begin{proof}
(a) and (b): The cases $\beta< \gamma$, $\beta=\gamma$ and $\beta > \gamma$ follow from Proposition 1.5.8, Proposition 1.5.9a and Proposition 1.5.10  in \cite{BGT} respectively.

(c) The existence of an asymptotic inverse which is regularly varying of index $1/(\beta \wedge \gamma)$  follows from (a) and  \cite[Proposition 1.5.12]{BGT}.
The fact that $\zeta$ can be chosen to be continuous, bounded below by $1$ and
non-decreasing  follows from Theorem 1.8.2, Proposition 1.5.1 and Theorem 1.5.3 of \cite{BGT} respectively. The existence of $C>0$ satisfying \eqref{e-inv} follows from the definition of asymptotic inverse and
continuity of $\zeta$,$\eta$ and $\tilde \eta$ and $\lim_{t \to \infty} \tilde \eta(t)/\eta(t) =1$.
\end{proof}

\begin{theorem}[Nash inequality] \label{t-nash}
 Let  $\phi:[0,\infty) \to [1,\infty)$ be a  continuous, regularly varying  function of positive index.
Let $K$ be Markov operator satisfying \ref{e-ker} with symmetric kernel $k$ with respect to the measure $\mu$. Then there exist constants $C_1,C_2>0$ such that
\begin{equation}\label{e-nash}
 \norm{f}_2^2 \le C_1 \E_{K^2}(f,f) \tilde \eta \left( V^{-1}\left(C_2 \frac{\norm{f}_1^2}{\norm{f}_2^2} \right)\right)
\end{equation}
for all $f \in \ell^1(\Gamma,\mu)$, where $\tilde \eta$ is given by \eqref{e-defeta} and \eqref{e-defteta}.
\end{theorem}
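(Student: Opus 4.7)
The plan is to deduce Nash from the pseudo-Poincar\'{e} inequality of Proposition \ref{p-pp} by the usual split $\norm{f}_2^2 \le 2\norm{f-f_R}_2^2 + 2\norm{f_R}_2^2$ and optimization over $R$, and then at the end to replace $\E_K$ by $\E_{K^2}$ via a pointwise comparison of their kernels.

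For the first term, Proposition \ref{p-pp} gives $\norm{f-f_R}_2^2 \le C\eta(R)\E_K(f,f)$. For the second, the definition of $f_R$ together with \eqref{e-hom} gives $\norm{f_R}_\infty \le C_h V(R)^{-1}\norm{f}_1$, while $\norm{f_R}_1 \le \norm{f}_1$ since ball-averaging is an $\ell^1$-contraction; interpolating, $\norm{f_R}_2^2 \le \norm{f_R}_\infty \norm{f_R}_1 \le C_h V(R)^{-1}\norm{f}_1^2$. A trivial Cauchy--Schwarz argument using \eqref{e-count} shows $\norm{f}_1^2/\norm{f}_2^2 \ge C_\mu^{-1}$, so one can take $R = V^{-1}\bigl(C_2 \norm{f}_1^2/\norm{f}_2^2\bigr)$ with $C_2$ chosen large enough that $V^{-1}$ is applicable and the $\norm{f_R}_2^2$ contribution is absorbed into $\tfrac{1}{2}\norm{f}_2^2$. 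Rearranging and using $\eta \le \tilde\eta$ yields $\norm{f}_2^2 \le C'\tilde\eta(R)\E_K(f,f)$.

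Finally, to upgrade $\E_K$ to $\E_{K^2}$, observe that \ref{e-ker} applied at $d(x,x)=0$, combined with \eqref{e-count}, provides a uniform lower bound $k(x,x)\mu(x) \ge c_0 > 0$; retaining only the term $z = x$ in $k_2(x,y) = \sum_z k(x,z)k(z,y)\mu(z)$ then gives the pointwise comparison $k_2(x,y) \ge c_0\, k(x,y)$ on $\Gamma \times \Gamma$. Inserting this into the symmetric Dirichlet-form expressions
\[
 \E_{K^j}(f,f) = \tfrac{1}{2}\sum_{x,y \in \Gamma}(f(x)-f(y))^2\, k_j(x,y)\, \mu(x)\mu(y), \quad j = 1,2,
\]
yields $\E_K(f,f) \le c_0^{-1}\E_{K^2}(f,f)$, which combined with the previous display proves \eqref{e-nash}. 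No single step is especially delicate once Proposition \ref{p-pp} is available; the only nontrivial choice is to phrase Nash with $\E_{K^2}$ rather than $\E_K$, and the point of doing so is that the telescoping identity $\E_{K^2}(K^n f, K^n f) = \norm{K^n f}_2^2 - \norm{K^{n+1}f}_2^2$ will drive the ultracontractivity bound on $\psi_K$ in the next section.
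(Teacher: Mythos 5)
Your proposal matches the paper's argument in all essentials: the same decomposition $\norm{f}_2^2 \le 2\norm{f-f_R}_2^2 + 2\norm{f_R}_2^2$, the same $\ell^\infty$--$\ell^1$ bound on $\norm{f_R}_2^2$, a choice of $R$ balancing the two contributions (the paper solves $(\tilde\eta(R)V(R))^{-1}=c_1\E_K(f,f)/\norm{f}_1^2$ implicitly; your direct assignment $R=V^{-1}(C_2\norm{f}_1^2/\norm{f}_2^2)$ with $C_2\ge 4C_h^3$ is equivalent and a bit cleaner), and the identical kernel-positivity step $k_2(x,y)\ge k(x,x)k(x,y)\mu(x)\ge c_0 k(x,y)$ to pass from $\E_K$ to $\E_{K^2}$. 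One small slip: the averaging operator $f\mapsto f_R$ is not in general an $\ell^1(\mu)$-contraction, since its kernel $\ind_{\{d(x,y)\le R\}}/V(x,R)$ is row-normalized in $x$ but not $\mu$-symmetric, so the adjoint row sums $\sum_{y\in B(x,R)}\mu(y)/V(y,R)$ need not be $\le 1$; the paper's $\norm{f_R}_1\le C_h^2\norm{f}_1$, which follows from the uniform bound \eqref{e-hom}, is what you should use, and it does not affect the rest of your argument.
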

\begin{proof}
 Let $R>0$ and $f \in \ell^{1}(\Gamma,\mu)$.

By \eqref{e-hom} and triangle inequality, we have \[ \norm{f_R}_\infty \le C_h \norm{f}_1/ V(R) \hspace{1cm} \operatorname{and}\hspace{1cm}  \norm{f_R}_1 \le C_h^2  \norm{f}_1.\]
 Hence by H\"{o}lder's inequality
 \begin{equation} \label{e-n1}
  \norm{f_R}_2^2 \le \norm{f_R}_\infty \norm{f_R}_1 \le C_h^3  \frac{ \norm{f}_1^2}{V(R)}
 \end{equation}
for all $f \in \ell^{1}(\Gamma,\mu)$ and for all $R> 0$.  By \eqref{e-n1} and Proposition \ref{p-pp}, there exists $C_3>0$ such that
\begin{align}
 \nonumber \norm{f}_2^2 &\le 2 \norm{f-f_R}_2^2 + 2 \norm{f_R}_2^2 \\
\nonumber &\le C_3 \left( \eta(R) \E_K(f,f) + \frac{\norm{f}_1^2}{V(R)} \right) \\
\label{e-n2} &\le C_3 \left( \tilde{\eta}(R) \E_K(f,f) + \frac{\norm{f}_1^2}{V(R)} \right).
\end{align}
To minimize \eqref{e-n2}, we want to choose $R=R_0 >0$ such that $ \left( \tilde \eta (R_0)V(R_0) \right)^{-1} \simeq \E_K(f,f)/\norm{f}_1^2$.

Note that $R \mapsto  \left(\tilde{\eta}(R)V(R) \right)^{-1}$ is a strictly decreasing continuous function with
\[
 \lim_{R \to 0^+} \left( \tilde \eta (R)V(R) \right)^{-1}  =  \left(\eta (0)V(0)\right)^{-1} \hspace{5mm} \operatorname{and} \hspace{5mm}  \lim_{R \to \infty} \left( \tilde \eta (R)V(R) \right)^{-1}  = 0.
\]
Therefore the equation
\begin{equation} \label{e-n3}
 \left( \tilde \eta (R)V(R) \right)^{-1}  = t
\end{equation}
has an unique solution for all $ t \in \left(0, \left(\eta (0)V(0)\right)^{-1} \right]$.

Since $K$ is a contraction in $\ell^2(\Gamma,\mu)$, we have
\[
 \E_K(f,f) = \langle (I-K)f,f \rangle  \le \norm{f}_2^2 + \abs{\langle Kf,f \rangle } \le \norm{f}_2^2 + \norm{f}_2 \norm{Kf}_2 \le 2 \norm{f}_2^2.
\]
By \eqref{e-count} and using $\ell^p$ inequalities for counting measure, we have $\norm{f}_1^2 \ge C_\mu^{-3} \norm{f}_2^2$. Combining these observations gives
\begin{equation} \label{e-n4}
 \E_K(f,f)/\norm{f}_1^2 \le 2 C_\mu^3
\end{equation}
for all $f \in \ell^{1}(\Gamma,\mu)$.
By \eqref{e-n3} and \eqref{e-n4}, for any $f \in \ell^1(\Gamma,\mu)$ with $f \neq 0$, there exists an unique solution $R_0$ to the equation
\begin{equation}\label{e-n5}
  \left( \tilde \eta (R_0)V(R_0) \right)^{-1}  = c_1 \frac{\E_K(f,f)}{\norm{f}_1^2},
\end{equation}
where $c_1=\left(2 C_\mu^3 \eta (0)V(0)\right)^{-1}$. Substituting the above solution $R_0$ in \eqref{e-n2} gives
$\norm{f}_2^2 \le C_3 (1+c_1^{-1}) \norm{f}_1^2 / V(R_0) $ or equivalently,
\begin{equation} \label{e-n5a}
 R_0 \le V^{-1} \left( C_2 \norm{f}_1^2/ \norm{f}_2^2 \right)
\end{equation}
where $C_2:= C_3 (1+c_1^{-1}) $.
Since $\tilde \eta$ is a non-decreasing function, by \eqref{e-n5} and \eqref{e-n5a} we have
\[
 \frac{\norm{f}_1^2}{\E_K(f,f)} \le c_1 C_2 \frac{\norm{f}_1^2}{\norm{f}_2^2} \tilde \eta \left( V^{-1}\left(C_2 \frac{\norm{f}_1^2}{\norm{f}_2^2} \right)\right).
 % \le C_1  \frac{\norm{f}_1^2}{\norm{f}_2^2} \eta \left( V^{-1}\left(C_2 \frac{\norm{f}_1^2}{\norm{f}_2^2} \right)\right)\
\]
 Hence we obtain the Nash inequality
\begin{equation}\label{e-n6}
 \norm{f}_2^2 \le c_1 C_2 \E_K(f,f) \tilde \eta \left( V^{-1}\left(C_2 \frac{\norm{f}_1^2}{\norm{f}_2^2} \right)\right).
\end{equation}
By \ref{e-ker} and \eqref{e-count}, there exists $\alpha>0$ such that $\inf_{x \in \Gamma} k_1(x,x) \mu(x) \ge \alpha$.  Since $k_2(x,y) \ge k_1(x,y) k_1(y,y) \mu(y) \ge \alpha k_1(x,y)$, we have
\[
 \E_K(f,f) \le \alpha^{-1} \E_{K^2} (f,f)
\]
for all $f \in \ell^2(\Gamma,\mu)$. This along with \eqref{e-n6} gives the desired Nash inequality.
\end{proof}

\begin{theorem}[Ultracontractivity]\label{t-ultra}
  Let $(\Gamma,\mu)$ be a weighted graph satisfying \eqref{e-count}, \eqref{e-vd}, \eqref{e-hom} and
 its heat kernel $p_n$ satisfies the sub-Gaussian bounds \eqref{e-uep} and \eqref{e-lep} with escape time exponent $\gamma$. Let $K$ be a Markov operator symmetric with respect to
 the measure $\mu$ satisfying \ref{e-ker}, where $\phi:[0,\infty) \to [1,\infty)$ is a continuous regularly varying function of positive index. Then there exists a constant $C>0$ such that
 \begin{equation*}
   \psi_K(n) \le \frac{C}{V(\zeta(n))}
 \end{equation*}
for all $n \in \N$, where $\zeta:[0,\infty) \to [1,\infty)$ is a continuous non-decreasing function which is an asymptotic inverse of
 $t \mapsto t^{\gamma}/ \int_0^t \frac{s^{\gamma-1}\, ds}{\phi(s)}$.
\end{theorem}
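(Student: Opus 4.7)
The plan is to deduce the on-diagonal upper bound on $\psi_K$ from the Nash inequality of Theorem \ref{t-nash} via the standard Nash--Coulhon iteration scheme. Fix $f \in \ell^1(\Gamma,\mu)$ with $\norm{f}_1 \le 1$ and set $u_n := \norm{K^n f}_2^2$. Since $K$ is an $\ell^1$-contraction, $\norm{K^n f}_1 \le 1$ for all $n$, and $\E_{K^2}(K^n f, K^n f) = u_n - u_{n+1}$. Applying Theorem \ref{t-nash} to $K^n f$ thus yields
\begin{equation*}
 u_n \le C_1 (u_n - u_{n+1})\,\tilde\eta\!\left(V^{-1}(C_2/u_n)\right).
\end{equation*}
Writing $B(u) := C_1 \tilde\eta(V^{-1}(C_2/u))$, the sequence $(u_n)$ is non-increasing (contractivity of $K$) and the map $u \mapsto B(u)/u$ is non-increasing (since $\tilde\eta$ and $V^{-1}$ are non-decreasing). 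Telescoping the inequality $B(u_k)(u_k-u_{k+1})/u_k \ge 1$ and comparing each term to $\int_{u_{k+1}}^{u_k} B(s)/s\,ds$ gives
\begin{equation*}
 n \le \int_{u_n}^{u_0} \frac{B(s)}{s}\,ds.
\end{equation*}

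Next I would change variables via $s = C_2/V(r)$, so that the one-form $B(s)/s\,ds$ transforms to $-C_1\tilde\eta(r)\,dV(r)/V(r)$, giving
\begin{equation*}
 n \le C_1 \int_{r_0}^{r_n}\tilde\eta(r)\,\frac{dV(r)}{V(r)}, \qquad r_n := V^{-1}(C_2/u_n).
\end{equation*}
The central estimate to prove is
\begin{equation*}
\int_{r_0}^{r_n} \tilde\eta(r)\,\frac{dV(r)}{V(r)} \le C\,\tilde\eta(r_n),
\end{equation*}
which I would establish by a dyadic decomposition: volume doubling \eqref{e-vd} gives $\log(V(2^{k+1})/V(2^k)) \le \log C_D$, so the integral is controlled by $\log C_D \sum_{2^k \le r_n} \tilde\eta(2^{k+1})$, and because $\tilde\eta$ is regularly varying with positive index $\beta \wedge \gamma$ by Lemma \ref{l-eta}(a), this geometric-type series is dominated by its last term up to a constant.

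Combining the two steps gives $n \le C'\,\tilde\eta(r_n)$. Applying the non-decreasing function $\zeta$ and invoking \eqref{e-inv} from Lemma \ref{l-eta}(c) produces $r_n \ge c\,\zeta(n/C')$; the regular variation of $\zeta$ (Potter's bound) together with doubling \eqref{e-vd} then upgrade this to $V(r_n) \ge c'V(\zeta(n))$, whence $u_n = C_2/V(r_n) \le C/V(\zeta(n))$. Taking the supremum over $f$ with $\norm{f}_1 \le 1$ gives $\psi_K(n) = \norm{K^n}_{1\to 2}^2 \le C/V(\zeta(n))$, with small values of $n$ handled trivially since $\psi_K$ is bounded above (by \ref{e-ker} and \eqref{e-count}) and $1/V(\zeta(n))$ is bounded below on any bounded range. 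I expect the main technical obstacle to be the integral estimate in the second paragraph: one must reconcile the mere doubling assumption on $V$ (which is not itself regularly varying) with the regular variation of $\tilde\eta$, and the dyadic bookkeeping plus the verification that $B(u)/u$ is non-increasing are where small errors can creep in; the remainder is essentially the standard Nash--Coulhon recipe.
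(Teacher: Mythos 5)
Your argument is correct and follows essentially the same route as the paper: derive the on-diagonal bound from the Nash inequality of Theorem \ref{t-nash} via the Coulhon/Nash iteration, reduce the control of the iteration to the integral $\int \tilde\eta(V^{-1}(C_2 s))\,ds/s$, and bound that integral by its endpoint value using volume doubling (dyadic decomposition) together with the regular variation of $\tilde\eta$ and Potter's bound, then invert via $\zeta$ and Lemma \ref{l-eta}. The only difference is presentational: the paper invokes the iteration step as a black box (citing \cite[Theorem 2.2.1]{PSu} or \cite[Proposition IV.1]{Cou}) and carries out the dyadic estimate in the $s$-variable, whereas you spell out the telescoping and change variables to $r = V^{-1}(C_2/s)$ before doing the same dyadic estimate.
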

\begin{proof}
 Let $\mu_*= \inf_{x \in \Gamma} \mu(x)$. Define $g:(0,1/\mu_*] \to [0,\infty)$ by
 \[
  g(t):= \int_{\mu_*}^{1/t} C_1 \tilde \eta \left( V^{-1} (C_2 s)\right) \frac{ds}{s}
 \]
and $m:[0,\infty) \to (0,1/\mu_*]$ as the inverse of $g$, where $C_1,C_2$ are constants from \eqref{e-nash}.
Since  $g$ is a decreasing, surjective, continuous function, so is $m$.
Observe that we can increase the constant $C_2$ in \eqref{e-nash} without affecting the Nash inequality. We choose $C_2$ such that $C_2 \ge V(1)/\mu_*$, so that
\begin{equation}\label{e-ult1}
V^{-1}(C_2s) \ge 1
\end{equation}
for all $s \ge \mu_*$.

By a standard ultracontractivity estimate using Nash inequality \eqref{e-nash} (see \cite[ Theorem 2.2.1]{PSu} or \cite[Proposition IV.1]{Cou}), we obtain
\begin{equation}\label{e-ult2}
 \psi_K(n) \le m(n)
\end{equation}
for all $n \in \N^*$.

We now estimate the functions $g(t)$ and its inverse $m(t)$. For $t^{-1} \ge \mu_*$, choose $L \in \N$ such that $C_D^L \mu_* \in [t^{-1},C_D t^{-1})$.
We have
\begin{align}
 \nonumber g(t) &\le \int_{\mu_*}^{C_D^L \mu_* } C_1 \tilde \eta \left( V^{-1} (C_2 s)\right) \frac{ds}{s} = C_1 \int_{C_2 \mu_*}^{C_D^L C_2 \mu_* }  \tilde \eta \left( V^{-1} ( s)\right) \frac{ds}{s} \\
 \nonumber &\le C_1  \sum_{k=1}^L \int_{C_D^{k-1} C_2 \mu_*}^{C_D^k C_2 \mu_*} \frac{ \tilde \eta(V^{-1}(s)) \, ds}{s} \\
 & \le C_1 \sum_{k=1}^L \frac{ \tilde \eta(V^{-1}(C_D^k C_2 \mu_*)) }{ C_D^{k-1} C_2 \mu_* } \left( (C_D-1) C_D^{k-1}  C_2 \mu_*\right) \nonumber\\
 &\le  C_3 \sum_{k=1}^L  \tilde \eta(V^{-1}(C_D^k C_2 \mu_*)) \label{e-ult3}
\end{align}
where $C_3= C_1 (C_D-1)$.
In the third line above, we used that $\tilde \eta \circ V^{-1}$ is a non-decreasing function.

By Lemma \ref{l-eta} and \cite[Theorem 1.5.3]{BGT}, $\tilde \eta$ is regularly varying of positive index.
Hence by Potter's bounds \cite[Theorem 1.5.6]{BGT} and using that $\tilde \eta$ is a positive continuous function, there exists $C_4>1$, $\beta_1>\beta_2>0$ such that
\begin{equation}\label{e-ult4}
 \frac{\tilde \eta (s)}{\tilde \eta (t)} \le C_4 \max \left( \left( \frac{s}{t} \right)^{\beta_1}, \left( \frac{s}{t} \right)^{\beta_2} \right)
\end{equation}
for all $s,t \in [1,\infty)$.
By \eqref{e-vd}, \eqref{e-ult1} and \eqref{e-ult4} , we get
\begin{equation} \label{e-ult5}
 \tilde \eta(V^{-1}(C_D^k C_2 \mu_*)) \le  \tilde \eta(2^{k-L}V^{-1}(C_D^L C_2 \mu_*)) \le C_4 2^{\beta_2(k-L)} \tilde \eta(V^{-1}(C_D^L C_2 \mu_*))
\end{equation}
for all $k=1,2,\ldots,L$.
By \eqref{e-ult3} and \eqref{e-ult5},
\begin{equation*} \label{e-ult6}
g(t) \le C_5  \tilde \eta(V^{-1} (C_D C_2 /t))
\end{equation*}
for all $t \le \mu_*^{-1}$, where $ C_5:=C_3C_4(1-2^{-\beta_2})^{-1}$. Therefore
\[
 t =g(m(t)) \le C_5 \tilde \eta(V^{-1} (C_D C_2 /m(t)))
\]
for all $t \ge 0$.

We use an asymptotic inverse $\zeta$ of the function $\eta$ as described in Lemma \ref{l-eta}.
Hence by Potter's theorem \cite[Theorem 1.5.6]{BGT}) and \eqref{e-inv} , there exists $C_6,C_7>0$ such that
\begin{equation}
  \zeta(t) \le C_6 \zeta(t/C_5) \le C_6 \zeta(\eta(V^{-1} (C_D C_2 /m(t)))) \le C_7 V^{-1} (C_D C_2 /m(t))
\end{equation}
for all $t \ge 1$.
By \eqref{e-vc}, there exists $C_8>0$ such that
\begin{equation}
 m(t) \le C_D C_2 / V( \zeta(t)/C_7) \le \frac{C_8}{V(\zeta(t))}.
\end{equation}
The conclusion follows from \eqref{e-ult2}.
\end{proof}
\section{Lower bound on \texorpdfstring{$\psi_K$}{psik}}
The lower bound on $\psi_K$ follows from a test function argument due to Coulhon and Grigor'yan \cite[Theorem 4.6]{CG}.
However we need a good test function for that argument to work. Such a test function can be obtained from the resistance estimate in \eqref{e-res}.
\begin{theorem} \label{t-lb}
   Let $(\Gamma,\mu)$ be a weighted graph satisfying \eqref{e-count}, \eqref{e-vd}, \eqref{e-hom}, \eqref{e-p0} and
 its heat kernel $p_n$ satisfies the sub-Gaussian bounds \eqref{e-uep} and \eqref{e-lep} with escape time exponent $\gamma$. Let $K$ be a Markov operator symmetric with respect to
 the measure $\mu$ satisfying \ref{e-ker}, where $\phi:[0,\infty) \to [1,\infty)$ is a continuous regularly varying function of positive index. Then there exists a constant $c>0$ such that
 \begin{equation*}
   \psi_K(n) \ge \frac{c}{V(\zeta(n))}
 \end{equation*}
for all $n \in \N$, where $\zeta:[0,\infty) \to [1,\infty)$ is a continuous non-decreasing function which is an asymptotic inverse of
$t \mapsto t^\gamma/ \int_0^t \frac{s^{\gamma -1} \,ds}{\phi(s)}$.
\end{theorem}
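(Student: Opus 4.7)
The plan is to apply the test-function lower bound of Coulhon-Grigor'yan \cite[Theorem 4.6]{CG}: it suffices to exhibit, for each sufficiently large $n$, a non-negative $f \in \ell^2(\Gamma,\mu)$ with $\norm{f}_2^2 \asymp \norm{f}_1 \asymp V(\zeta(n))$ and $\E_K(f,f) \le \norm{f}_2^2/(4n)$. Indeed, the elementary spectral inequality $1-\lambda^{2n} \le 2n(1-\lambda)$ on $[-1,1]$ yields $\norm{K^n f}_2^2 \ge \norm{f}_2^2 - 2n\E_K(f,f) \ge \norm{f}_2^2/2$, which combined with $\norm{K^n f}_2^2 \le \norm{K^n}_{1\to 2}^2 \norm{f}_1^2 = \psi_K(n)\norm{f}_1^2$ (see \eqref{e-defpsi}) produces $\psi_K(n) \gtrsim 1/V(\zeta(n))$.

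For the test function I would take the equilibrium potential $f = f_{x,r}$ of $B(x,r)$ inside $B(x,Ar)$ with respect to $\E_P$, where $A$ is the constant from \eqref{e-res}. By the maximum principle $f$ takes values in $[0,1]$, equals $1$ on $B(x,r)$, vanishes outside $B(x,Ar)$, and satisfies $\E_P(f,f) = R_P(B(x,r),B(x,Ar)^c)^{-1} \le C_R V(r)/r^\gamma$ by \eqref{e-res}. The homogeneous volume doubling \eqref{e-hom}, \eqref{e-vd} then yield $\norm{f}_2^2 \asymp \norm{f}_1 \asymp V(r)$.

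The core step is to bound $\E_K(f,f) \lesssim V(r)/\eta(r)$. Proposition \ref{p-sub} gives $\E_K \asymp \E_{Q_\phi}$, so I would expand the series \eqref{e-qp} and combine two competing per-term estimates: Lemma \ref{l-dircomp} together with the inequality $\E_Q \le 2\E_P$ from (the proof of) Lemma \ref{l-res} gives $\E_{Q^{2\flr{m^\gamma}}}(f,f) \le C m^\gamma \E_P(f,f)$, while trivially $\E_{Q^{2\flr{m^\gamma}}}(f,f) \le 2\norm{f}_2^2$ since $Q$ is an $\ell^2$-contraction. Splitting the series at the crossover $m \asymp r$ where these two bounds balance, and invoking the standard regular-variation estimates $\sum_{m \le r} m^{\gamma-1}/\phi(m) \asymp r^\gamma/\eta(r)$ (direct from the definition of $\eta$) and $\sum_{m > r} 1/(m\phi(m)) \asymp 1/\phi(r) \lesssim 1/\eta(r)$ (the last inequality by Lemma \ref{l-eta}(b)), both halves contribute $\lesssim V(r)/\eta(r)$ after substituting $\E_P(f,f) \lesssim V(r)/r^\gamma$ and $\norm{f}_2^2 \asymp V(r)$.

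To conclude, I choose $r := C_0 \zeta(n)$ with $C_0$ large enough that $\eta(r) \ge 8n$, which is possible by Lemma \ref{l-eta}(c) and the regular variation of $\eta$; this forces $\E_K(f,f) \le \norm{f}_2^2/(4n)$, and \eqref{e-vd} absorbs the constant $C_0$ in the final estimate $V(C_0\zeta(n)) \lesssim V(\zeta(n))$. The main obstacle I anticipate is the series bookkeeping for $\E_{Q_\phi}(f,f)$: the crossover at $m \asymp r$ is precisely the scale at which the subordinator's low-moment tail transitions from seeing the local Dirichlet energy $\E_P(f,f)$ to seeing the full $\ell^2$-mass, and tracking this balance carefully is what produces $\eta$ as the correct temporal scale.
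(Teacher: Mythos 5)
Your argument is correct, and its structure mirrors the paper's in all the essential steps: the equilibrium potential built from the resistance estimate \eqref{e-res}, the reduction from $K$ to $Q_\phi$ via Proposition \ref{p-sub}, the per-term bound $\E_{Q^{2\flr{m^\gamma}}}(f,f) \le \min(2m^\gamma\E_Q(f,f),\, 2\norm{f}_2^2)$ from Lemma \ref{l-dircomp} and $\ell^2$-contractivity, the crossover split at $m\asymp r$, and the final scale $r\asymp\zeta(n)$. The one genuine difference is in how you pass from the Dirichlet-form bound $\E_K(f,f)\lesssim V(r)/\eta(r)$ to the lower bound on $\psi_K(n)$. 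The paper invokes Lemma \ref{l-noninc} (log-convexity of $i\mapsto\norm{K^if}_2^2$) to get the multiplicative inequality $\psi_K(n)\ge\sup_A\lambda(A)^n/\mu(A)$ with $\lambda(A)=1-\inf_{\operatorname{supp} f\subset A}\E_{K^2}(f,f)/\norm{f}_2^2$, then bounds $(1-C/\eta(\zeta(n)))^n$ from below; you instead linearize directly via the elementary spectral inequality $1-\lambda^{2n}\le 2n(1-\lambda)$ on $[-1,1]$, giving $\norm{K^nf}_2^2\ge\norm{f}_2^2-2n\E_K(f,f)$, which with $\norm{K^nf}_2^2\le\psi_K(n)\norm{f}_1^2$ avoids the $\lambda(A)^n$ machinery entirely. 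Both are clean and give the same order of magnitude; your version is slightly more elementary in that it does not need Lemma \ref{l-noninc} or the exponential lower bound $(1-C/n)^n\ge c$, at the cost of requiring the explicit threshold $\E_K(f,f)\le\norm{f}_2^2/(4n)$ to be arranged before concluding, whereas the paper's multiplicative route works for any $R$ and then optimizes. One cosmetic difference: you take the $\E_P$-equilibrium potential and use $\E_Q\le 2\E_P$, while the paper passes through $R_Q$ directly via Lemma \ref{l-res}; these are equivalent.
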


\begin{proof}
 By Lemma \ref{l-noninc}, we have
 \begin{equation} \label{e-lb1}
  \frac{\norm{K^lf}_2^2 }{\norm{f}_2^2} \ge \left( \frac{\norm{Kf}_2^2}{\norm{f}_2^2} \right)^l.
 \end{equation}
For any finite set $A$ define
\[
 \lambda(A)= \sup_{\substack{\operatorname{supp}(f) \subseteq A,\\ f \not\equiv 0}} \frac{\norm{Kf}_2^2}{\norm{f}_2^2}.
\]
Then by \eqref{e-lb1} and Cauchy-Schwarz inequality
\begin{align}
\nonumber  \psi_K(n)= \norm{K^n}_{1\to 2}^2 &\ge \sup_A \sup_{\substack{\operatorname{supp}(f) \subseteq A, \\ \norm{f}_1=1}} \norm{f}_2^2\left( \frac{\norm{Kf}_2^2}{\norm{f}_2^2} \right)^n  \\
&\ge \sup_A \frac{\lambda(A)^n}{\mu(A)} \label{e-lb2}.
\end{align}
We write $\lambda(A)$ as
\begin{equation}
\lambda(A)= 1-( 1 - \lambda(A) )  =1 - \inf_{\substack{\operatorname{supp}(f) \subseteq A,\\ f \not\equiv 0}} \frac{\E_{K^2}(f,f)}{\norm{f}_2^2}\label{e-lb3}.
\end{equation}
To obtain a lower bound on $\lambda(A)$ it suffices to pick a test function $f$.
By Lemma \ref{l-dircomp}, Proposition \ref{p-sub}, there exists $C_1>0$ such that
\begin{equation} \label{e-lb4}
 \E_{K^2}(f,f) \le 2 \E_K(f,f) \le C_1 \E_{Q_\phi}(f,f) = C_1 c_\phi \sum_{n=1}^\infty \frac{1}{n \phi(n)} \E_{Q^{2\flr{n^\gamma}}}(f,f).
\end{equation}
By Lemma \ref{l-res}, \eqref{e-res} and \eqref{e-vc}, there exist constants $c_1 \in (0,1)$ and $C_2,C_3 > 1$ such that
\[
 R_Q(B(x,c_1 R), B(x,R)^c) \ge C_2^{-1} \frac{R^\gamma}{V(R)}
\]
for all $x \in \Gamma$ and for all $R \ge C_3$. Therefore for any $x \in \Gamma$ and for any $R>C_3$, there
exists $f \in \R^\Gamma$ satisfying $\operatorname{supp}(f) \subseteq B(x,R), f\restr_{B(x,c_1R)} \equiv 1$ and
\begin{equation} \label{e-lb5}
 \E_Q(f,f) \le \frac {2 C_2 V(R)}{ R^\gamma}.
\end{equation}
Since such a function has $\norm{f}_2^2 \ge V(x,c_1R)$, by \eqref{e-hom}, \eqref{e-vc} and  \eqref{e-lb5}, there exists $C_4>1$ such that the following holds: for any $x \in \Gamma$ and for any $R>C_3$, there
exists $f \in \R^\Gamma$ satisfying $\operatorname{supp}(f) \subseteq B(x,R)$ and
\begin{equation} \label{e-lb6}
 \frac{\E_Q(f,f)}{\norm{f}_2^2} \le C_4 R^{-\gamma}.
\end{equation}

Using Lemma \ref{l-dircomp} and the bound $\E_{Q^{2k}(f,f)} = \norm{f}_2^2 - \norm{Q^kf}_2^2 \le \norm{f}_2^2$, we obtain
\begin{equation} \label{e-lb7}
 \sum_{n=1}^\infty \frac{1}{n \phi(n)} \E_{Q^{2\flr{n^\gamma}}}(f,f)\le 2 \sum_{n=1}^{\flr{R}} \frac{n^{\gamma-1}}{\phi(n)} \E_Q(f,f) + \norm{f}_2^2 \sum_{n=\flr{R}+1}^\infty \frac{1}{n \phi(n)}
 %&\le& C_5 \frac{\norm{f}_2^2}{ \eta(r) } \label{e-lb7}
\end{equation}
for all $f \in \ell^{2}(\Gamma,\mu)$. For the second term above, we use \cite[Proposition 1.5.10]{BGT} to obtain $C_5 >0$ such that
\begin{equation} \label{e-lb8}
 \sum_{n=\flr{R}+1}^\infty \frac{1}{n \phi(n)} \le C_5 \frac{1}{\phi(R)}
\end{equation}
for all $R \ge 1$. By Potter's bound \cite[Theorem 1.5.6]{BGT} and continuity of $\phi$, there exists $C_6>0$ such that
\begin{equation} \label{e-lb9}
 \sum_{n=1}^{\flr{R}} \frac{n^{\gamma-1}}{\phi(n)} \le  C_6 \int_0^R \frac{s^{\gamma-1} \, ds}{\phi(s)}
\end{equation}
for all $R \ge 1$.

Combining \eqref{e-lb3}, \eqref{e-lb4}, \eqref{e-lb6}, \eqref{e-lb7}, \eqref{e-lb8}, \eqref{e-lb9} and using Lemma \ref{l-eta}(b), there exist constants $C_7>0$ and $R_0 >0$ such that
\[
 \lambda(B(x,R)) \ge 1 - \frac{C_7}{\eta(R)}
\]
for all $R > R_0$.
Combining \eqref{e-lb2}, \eqref{e-hom}, \eqref{e-inv} of Lemma \ref{l-eta}(c) along with the substitution $R= \zeta(n)$, there exists $N_1, C_8, c_1>0$ such that
\[
 \psi_K(n) \ge \frac{C_h^{-1}}{V(\zeta(n))} \left(1 - \frac{C_7}{\eta( \zeta(n))} \right)^n \ge   \frac{C_h^{-1}}{V(\zeta(n))} \left(1 - \frac{C_8}{n} \right)^n \ge \frac{c_1}{V(\zeta(n))}
\]
for all $n \in \N$ with $n \ge N_1$. The case $n \le N_1$ follows from \ref{e-ker}.
\end{proof}

\begin{proof}[Proof of Theorem \ref{t-main}]
 The upper bound and lower bound follows from Theorems \ref{t-ultra} and \ref{t-lb} respectively.
\end{proof}
\section{Stable subordination and the case \texorpdfstring{$\beta < \gamma$}{b-lt-g}}

In this section, we provide evidence to the conjecture in Remark \ref{r-main}(b) and (e). Let $(\Gamma,\mu)$ be a weighted graph
satisfying the volume doubling condition: there exists $C_D>0$ such that
\begin{equation} \label{e-gvd}
V_\mu(x,2r) \le C_D V_\mu(x,r)
\end{equation}
for all $x \in \Gamma$ and for all $r>0$.
By a slight abuse of notation, we denote $V_\mu$ by $V$ in this section.
We denote
Similar to \eqref{e-vc}, there is a volume comparison estimate
\begin{equation} \label{e-gvc}
\frac{V(x,r)}{V(x,s)} \le C_D \left( \frac{r}{s} \right)^\alpha
\end{equation}
for any $x \in \Gamma$ , for all $0 < s \le r$ and for all $\alpha \ge \log_2 C_D$.

As before let $P$ and $p_n$ denote the Markov operator corresponding to the natural random walk and the heat kernel respectively. We assume that
the heat kernel satisfies the following sub-Gaussian estimates.  There exist constants $c,C>0$ such that, for all $x,y \in \Gamma$
\begin{equation}\label{e-sgu}
 p_n(x,y) \le \frac{C}{V(x,n^{1/\gamma})} \exp \left[ - \left( \frac{d(x,y)^\gamma}{Cn} \right)^{\frac{1}{\gamma-1}} \right], \forall n \ge 1
\end{equation}
and
\begin{equation}\label{e-sgl}
(p_n+p_{n+1})(x,y) \ge \frac{c}{V(x,n^{1/\gamma})}  \exp \left[ - \left( \frac{d(x,y)^\gamma}{cn} \right)^{\frac{1}{\gamma-1}} \right], \forall n \ge 1 \vee d(x,y).
\end{equation}
Consider a random walk $X_n$ driven by the operator $Q$ defined in \eqref{e-defQ}.
We consider the continuous time Markov chain $Y_{\beta_0}(t) = X_{N(S_{\beta_0}(t))}$ where $N(t)$ and $S_{\beta_0}$ are independent Poisson process and $\beta_0$-stable subordinator for some $\beta_0 \in (0,1)$.
Let $k_{t,\beta_0}$ denote the kernel of $Y_{\beta_0}(t)$ with respect to the measure $\mu$. By definition of $k_{t,\beta_0}$, we have
\begin{equation} \label{e-defkt}
 k_{t,\beta_0}(x,y) = \sum_{i=0}^\infty  A_{\beta_0}(t,i) q_i(x,y)
\end{equation}
for all $t \ge 0$ and for all $x,y \in \Gamma$, where  $A_{\beta_0}(t,i):= \PP (N(S_{\beta_0}(t))= i) $.
Let $q_i$ denote the kernel of the iterated operator $Q^i$ with respect to the measure $\mu$ for $i \in \N$.
By the same proof as Lemma \ref{l-qk}, we get similar sub-Gaussian estimates for the more general volume doubling setup.  We assume that the kernel $q_n$ satisfies the following sub-Gaussian estimates:
There exist constants $c,C>0$ such that, for all $x,y \in \Gamma$
\begin{equation}\label{e-guq}
 q_n(x,y) \le \frac{C}{V(x,n^{1/\gamma})} \exp \left[ - \left( \frac{d(x,y)^\gamma}{Cn} \right)^{\frac{1}{\gamma-1}} \right], \forall n \ge 1
\end{equation}
and
\begin{equation}\label{e-qlq}
 q_n(x,y) \ge \frac{c}{V(x,n^{1/\gamma})}  \exp \left[ - \left( \frac{d(x,y)^\gamma}{cn} \right)^{\frac{1}{\gamma-1}} \right], \forall n \ge 1 \vee d(x,y).
\end{equation}
Using estimates on the stable subordinator $S_{\beta_0}$ and the estimates on the kernel $q_n$ similar to Lemma \ref{l-qk}, we show the following:
\begin{theorem} \label{t-evidence}
  Let $(\Gamma,\mu)$ be a weighted graph satisfying \eqref{e-gvd}  and
 its heat kernel $p_n$ satisfies the sub-Gaussian bounds \eqref{e-sgu} and \eqref{e-sgl} with escape time exponent $\gamma$. Let $k_{t,\beta_0}$ be the symmetric Markov kernel with respect to
 the measure $\mu$ defined by \eqref{e-defkt}. Then  for all $\beta_0 \in (0,1)$ there exists a constant $C>0$ such that
 \begin{equation} \label{e-evu}
 k_{n,\beta_0}(x,y)\le C \left( \frac{1}{ V(x,n^{1/\beta})} \wedge \frac{n}{ V(x,d(x,y))(1+d(x,y))^\beta} \right)
\end{equation}
 and
 \begin{equation} \label{e-evl}
  k_{n,\beta_0}(x,y) \ge C^{-1}\left( \frac{1}{ V(x,n^{1/\beta})} \wedge \frac{n}{ V(x,d(x,y))(1+d(x,y))^\beta} \right)
 \end{equation}
for all $x,y \in \Gamma$ and for all $n \in \N^*$, where $\beta =\beta_0 \gamma$.
\end{theorem}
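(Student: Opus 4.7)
The plan is to exploit the subordination representation \eqref{e-defkt} together with the sub-Gaussian bounds \eqref{e-guq}--\eqref{e-qlq} on $q_i$ and sharp pointwise estimates on the coefficients $A_{\beta_0}(n,i)=\PP(N(S_{\beta_0}(n))=i)$. Heuristically, $S_{\beta_0}(n)$ concentrates on scale $n^{1/\beta_0}$, so in $\asymp n^{1/\beta_0}$ steps the walk $X$ travels $\asymp (n^{1/\beta_0})^{1/\gamma}=n^{1/\beta}$, which explains the on-diagonal scale; the polynomial tail of $S_{\beta_0}$ accounts for the off-diagonal behaviour.

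My first step is to establish three facts about $A_{\beta_0}(n,i)$. Using the identity $A_{\beta_0}(n,i)=\int_0^\infty e^{-s}s^i/i!\cdot g_{\beta_0}(n,s)\,ds$, where $g_{\beta_0}(n,\cdot)$ is the density of the $\beta_0$-stable subordinator at time $n$, the self-similarity $g_{\beta_0}(n,s)=n^{-1/\beta_0}g_{\beta_0}(1,n^{-1/\beta_0}s)$, and the classical estimates $g_{\beta_0}(1,s)\asymp s^{-1-\beta_0}$ for $s\ge 1$ and boundedness elsewhere, I would derive: (a) $A_{\beta_0}(n,i)\le C\min(n^{-1/\beta_0},n(1+i)^{-1-\beta_0})$ for all $n,i$; (b) $A_{\beta_0}(n,i)\ge c\,n(1+i)^{-1-\beta_0}$ whenever $i\ge n^{1/\beta_0}$; (c) there exist $0<c_1<c_2$ so that $\sum_{c_1n^{1/\beta_0}\le i\le c_2n^{1/\beta_0}}A_{\beta_0}(n,i)\ge c>0$ uniformly in $n$.

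For the upper bound \eqref{e-evu}, I would split the series at $i_0:=\lceil d(x,y)^\gamma\rceil$. For $i\le i_0$ the stretched-exponential factor in \eqref{e-guq} dominates; after using \eqref{e-gvc} to absorb $V(x,i^{1/\gamma})^{-1}$ into $V(x,d(x,y))^{-1}$ at polynomial cost in $d(x,y)/i^{1/\gamma}$ and invoking $\sup_{t\ge 1}t^M e^{-(t/C)^{\gamma/(\gamma-1)}}<\infty$ (as in the proof of Proposition \ref{p-sub}), this portion satisfies both $C/V(x,n^{1/\beta})$ and $Cn/(V(x,d(x,y))(1+d(x,y))^\beta)$. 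For $i>i_0$ I use $q_i(x,y)\le C/V(x,i^{1/\gamma})$ together with (a): the range $i_0<i\le n^{1/\beta_0}$ is dominated by $C/V(x,n^{1/\beta})$ via \eqref{e-gvc} and $\sum_i A_{\beta_0}(n,i)\le 1$, while the tail $i>n^{1/\beta_0}$ is handled by comparing $\sum_i n i^{-1-\beta_0}/V(x,i^{1/\gamma})$ with an integral and applying \eqref{e-gvc}, giving the off-diagonal bound.

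For the lower bound \eqref{e-evl} I would restrict the subordination sum to a single favourable block. When $d(x,y)\le n^{1/\beta}$, restrict to $i\in[c_1n^{1/\beta_0},c_2n^{1/\beta_0}]$: there $i\ge d(x,y)^\gamma$, so \eqref{e-qlq} combined with \eqref{e-gvd} gives $q_i(x,y)\ge c/V(x,n^{1/\beta})$, and (c) bounds the $A_{\beta_0}$ mass on this block from below. When $d(x,y)>n^{1/\beta}$, restrict to $i\in[d(x,y)^\gamma,2d(x,y)^\gamma]$: here $i\ge n^{1/\beta_0}$ so (b) gives $A_{\beta_0}(n,i)\ge c\,n\,d(x,y)^{-(\gamma+\beta)}$, while \eqref{e-qlq} gives $q_i(x,y)\ge c/V(x,d(x,y))$; summing over the $\asymp d(x,y)^\gamma$ relevant values of $i$ produces the desired $cn/(V(x,d(x,y))d(x,y)^\beta)$. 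The main obstacle I anticipate is extracting the sharp two-sided pointwise estimates (a)--(c) from the stable-density asymptotics, particularly the tail lower bound (b) and the concentration (c); once those are in hand, the remaining analysis is careful bookkeeping with volume doubling and the sub-Gaussian bounds, in the spirit of classical subordination arguments for stable-like heat kernels.
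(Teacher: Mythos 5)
Your lower bound and off-diagonal upper bound proceed essentially as in the paper: you restrict the subordination sum to a favourable block of indices and use $A_{\beta_0}(n,i)\asymp n\,i^{-1-\beta_0}$ on that block together with the two-sided sub-Gaussian bounds on $q_i$. This is sound modulo constant adjustments (the paper's lower bound on $A_{\beta_0}(n,i)$ requires $i\ge\max(6,4n^{1/\beta_0})$, not merely $i\ge n^{1/\beta_0}$, so the case split should be at $d(x,y)^\gamma\gtrless 4n^{1/\beta_0}$).

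The gap is in the near-diagonal upper bound $k_{n,\beta_0}(x,y)\le C/V(x,n^{1/\beta})$, needed when $d(x,y)\le n^{1/\beta}$. Your bound (a), $A_{\beta_0}(n,i)\le Cn^{-1/\beta_0}$, misses the stretched-exponential suppression of $A_{\beta_0}(n,i)$ when $i\ll n^{1/\beta_0}$ (coming from \eqref{e-fub1} and Poisson concentration), and this loss is fatal. The stated justification, that the range $i_0<i\le n^{1/\beta_0}$ is ``dominated by $C/V(x,n^{1/\beta})$ via \eqref{e-gvc} and $\sum_i A_{\beta_0}(n,i)\le 1$,'' points the wrong way: for $i<n^{1/\beta_0}$ one has $V(x,i^{1/\gamma})^{-1}\ge V(x,n^{1/\beta})^{-1}$, not the reverse. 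If instead one inserts $A_{\beta_0}(n,i)\le Cn^{-1/\beta_0}$ and \eqref{e-gvc}, one obtains a factor $N^{-1}\sum_{i\le N}(N/i)^{\alpha/\gamma}$ with $N=n^{1/\beta_0}$, which is $O(1)$ only when $\alpha<\gamma$; for $\alpha\ge\gamma$ (e.g.\ $\Z^d$ with $d\ge 2$, $\gamma=2$, $\alpha=d$, or Sierpinski carpets) it blows up. Already the term $i=1$ illustrates the problem: (a) gives $A_{\beta_0}(n,1)q_1(x,y)\lesssim n^{-1/\beta_0}$, which exceeds $1/V(x,n^{1/\beta})\gtrsim n^{-\alpha/\beta}$ whenever $\alpha>\gamma$, whereas $A_{\beta_0}(n,1)$ is in fact stretched-exponentially small in $n$. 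The paper avoids any pointwise bound on $A_{\beta_0}(n,i)$ for the diagonal estimate: it applies Fubini to get $k_{n,\beta_0}(x,y)=\int_0^\infty f_{n,\beta_0}(u)\sum_i\frac{e^{-u}u^i}{i!}q_i(x,y)\,du$, uses the Poisson-averaging Lemma \ref{l-poiss} (which absorbs the factor $(u/(i+1))^{\alpha/\gamma}$ by shifting the Poisson index), and then invokes \eqref{e-fub1} for the resulting integral in $s=u n^{-1/\beta_0}$ near $s=0$. To repair your argument you would need either that Fubini/Poisson-averaging route, or a sharpening of (a) that includes a stretched-exponential factor in $n^{1/\beta_0}/i$ for $i\le n^{1/\beta_0}$.
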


We begin by recalling some known estimates for stable subordinator.
Let $f_{t, \beta_0}(u)$ be the density of the $\beta_0$-stable subordinator $S_{\beta_0}(t)$. We have the scaling relation
\[
 f_{t,\beta_0}(u)= t^{-1/\beta_0} f_{1,\beta_0}(t^{-1/\beta_0} u), \hspace{1cm} \beta_0 \in (0,1).
\]
By standard estimates on $f_{t,\beta_0}$ (see \cite[Section 3]{GS}) there exist constants $c_1,C_1>0$ such that
\begin{align}
 \label{e-fub}f_{t,\beta_0}(u) & \le C_1 t u^{-1-\beta_0}, \hspace{1cm} t,u >0, \\
  \label{e-fub1}f_{1,\beta_0}(u) & \le  C_1  u^{-\frac{2-\beta_0}{2 - 2 \beta_0}} e^{-c_1 u^{-\frac{\beta_0}{1-\beta_0}}}, \hspace{1cm} u \in (0,1), \\
 \label{e-flb} f_{t,\beta_0}(u) & \ge  c_1 t u^{-1-\beta_0}, \hspace{1cm} t>0, u > t^{1/\beta_0}.
\end{align}

Next, we estimate the quantity
\begin{equation} \label{e-Ab}
A_{\beta_0}(t,i)= \PP (N(S_{\beta_0}(t))= i) = \int_0^\infty f_{t,\beta_0}(u) \frac{e^{-u} u^i}{i!} \,du.
\end{equation}
By \eqref{e-fub} and Stirling asymptotics for Gamma function, there exists $C_2>0$
\begin{align}
 \nonumber  A_{\beta_0}(t,i)&\le  C_1 \int_0^\infty  t u^{-1-\beta_0}  \frac{e^{-u} u^i}{i!} \,du \\
 \label{e-ev1} &\le C_1 t i^{-1} \frac{\Gamma(i-\beta_0)}{\Gamma(i)} \le C_2 \frac{t}{i^{1+\beta_0}}
\end{align}
for all $t >0$ and for all $i \in \N^{*}$.
By Chebychev's inequality applied to Gamma distribution, we have
\begin{equation}
 \label{e-cheb}  \int_{\lambda/2}^\infty  \frac{e^{-u} u^{\lambda -1}}{\Gamma(\lambda)} \, du \ge \frac{1}{5}
\end{equation}
for all $\lambda \ge 5$.
Therefore, there exists $c_2>0$ such that
\begin{align}
 \label{e-ev2a}  A_{\beta_0}(t,i)&\ge  c_1 \int_{t^{1/\beta_0}}^\infty  t u^{-1-\beta_0}  \frac{e^{-u} u^i}{i!} \,du \\
 & \ge  c_1 \int_{(i - \beta_0)/2}^\infty  t u^{-1-\beta_0}  \frac{e^{-u} u^i}{i!} \,du \nonumber \\
 & \ge \frac{c_1}{5}   t \frac{\Gamma(i - \beta_0)}{i \Gamma(i)} \ge c_2 \frac{t}{i^{1+\beta_0}} \label{e-ev2}
\end{align}
for all $\beta_0 \in (0,1)$, for all $i \in \N^*$ and for all $t >0$ such that $i \ge \max\left(6, 4 t^{1/\beta_0}\right)$.
We used \eqref{e-flb} in the first line $i \ge \max\left(6, 4 t^{1/\beta_0}\right)$ in the second line and \eqref{e-cheb} and Stirling asymptotics for Gamma function in the last line.

We need the following estimate to prove the desired diagonal upper bound.
\begin{lemma} \label{l-poiss}
Under the doubing assumption \eqref{e-gvd}, there exists $C_1 >0$ such that
 \begin{equation}
 \label{e-poiss} \sum_{i=0}^ \infty \frac{\exp(-u) u^i}{i!} \frac{1}{ V(x,i^{1/\gamma})} \le \frac{C_1} { V(x, u^{1/\gamma})}
 \end{equation}
for all $x \in \Gamma$ and for all $u  \ge 0$.
\end{lemma}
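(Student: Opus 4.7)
The plan is to view the sum in \eqref{e-poiss} probabilistically as $\EE\bigl[V(x,N^{1/\gamma})^{-1}\bigr]$ with $N \sim \mathrm{Poisson}(u)$, and to prove that the ratio $\EE\bigl[V(x,u^{1/\gamma})/V(x,N^{1/\gamma})\bigr]$ is bounded by a constant depending only on $C_D$ and $\gamma$. For $u \le 1$ the argument is trivial: $u^{1/\gamma} < 1$ forces $V(x,u^{1/\gamma}) = \mu_x$, and since $V(x,\cdot)$ is non-decreasing, $V(x,i^{1/\gamma})^{-1} \le \mu_x^{-1}$ for every $i \ge 0$, so the sum is bounded by $\mu_x^{-1} = V(x,u^{1/\gamma})^{-1}$.

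For $u > 1$ I would split the sum at $i = \lfloor u/2\rfloor$. On the typical range $i \ge u/2$ we have $i^{1/\gamma} \ge u^{1/\gamma}/2^{1/\gamma} \ge u^{1/\gamma}/2$ (using $\gamma \ge 1$), so \eqref{e-gvd} in the form $V(x,u^{1/\gamma}) \le C_D V(x,u^{1/\gamma}/2)$ gives $V(x,i^{1/\gamma}) \ge C_D^{-1} V(x,u^{1/\gamma})$; since the corresponding Poisson weights sum to at most one, this range contributes at most $C_D/V(x,u^{1/\gamma})$.

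On the atypical range $0 \le i < u/2$, iterated application of \eqref{e-gvd} (equivalently, \eqref{e-gvc} together with the bound $V(x,1) \le C_D\mu_x$ obtained from doubling once at scale $1/2$) gives the uniform polynomial estimate
\[
\frac{V(x,u^{1/\gamma})}{V(x,i^{1/\gamma})} \le C\, u^{\alpha/\gamma}, \qquad 0 \le i < u/2,
\]
with $\alpha = \log_2 C_D$. A standard Chernoff lower-tail estimate for Poisson yields $\PP(N < u/2) \le (2/e)^{u/2}$, and the resulting contribution $(C\, u^{\alpha/\gamma}(2/e)^{u/2})/V(x,u^{1/\gamma})$ is uniformly bounded for $u \ge 1$. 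The only conceptual obstacle is matching the polynomial blow-up of $V(x,i^{1/\gamma})^{-1}$ for small $i$ against the Poisson weights, but the exponential decay of the Poisson lower tail comfortably absorbs any polynomial factor coming from doubling, so no delicate Stirling-type asymptotics are needed.
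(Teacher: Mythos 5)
Your proof is correct, but it handles the atypical Poisson range (small $i$) differently from the paper. Both arguments split the sum around a multiple of $u$ and dispatch the large-$i$ part by monotonicity of $V$ (plus, in your version, one doubling step). For small $i$, the paper keeps the $i$-dependent volume ratio $V(x,u^{1/\gamma})/V(x,i^{1/\gamma}) \le C_D\bigl(u/(i+1)\bigr)^{n_0}$ with $n_0 = \lceil (\log_2 C_D)/\gamma\rceil$, and absorbs that polynomial into the Poisson weights via the combinatorial identity $\frac{u^i}{i!}\bigl(\frac{u}{i+1}\bigr)^{n_0} \le n_0!\,\frac{u^{i+n_0}}{(i+n_0)!}$, so the shifted series again sums to at most $1$; no tail estimate is ever invoked. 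You instead bound the volume ratio uniformly by $Cu^{\alpha/\gamma}$ (the worst case, $i=0$) and beat it with the Chernoff lower-tail estimate $\PP(N<u/2)\le (2/e)^{u/2}$. Your route is cruder term by term but compensates via concentration; it is more transparent and more portable to other sharply concentrated weight distributions, whereas the paper's Poisson-shift trick is slightly tighter but specific to the exact factorial structure of the weights. One small inaccuracy: at $u=1$ you assert $V(x,u^{1/\gamma})=\mu_x$, which fails since $B(x,1)$ already contains the neighbors of $x$; the bound $V(x,1)\le C_D\mu_x$ that you invoke later covers this case, so it is only a wording slip.
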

\begin{proof}
Note that
\begin{align}
 \nonumber \sum_{i=0}^ \infty \frac{\exp(-u) u^i}{i!} \frac{1}{ V(x,i^{1/\gamma})} &\le \frac{1}{V(x,u^{1/\gamma})} \sum_{i=0}^{\flr{u}}\frac{\exp(-u) u^i}{i!} \frac{V(x,u^{1/\gamma})}{ V(x,i^{1/\gamma})} + \frac{1}{V(x,u^{1/\gamma})} \\
\nonumber & \le  \frac{C_2}{V(x,u^{1/\gamma})}  \sum_{i=0}^\infty  \frac{\exp(-u) u^i}{i!} \left( \frac{u}{i+1} \right)^{n_0}+ \frac{1}{V(x,u^{1/\gamma})} \\
\nonumber & \le  \frac{C_2 (n_0!)}{V(x,u^{1/\gamma})}  \sum_{i=0}^\infty  \frac{\exp(-u) u^{i+n_0}}{(i+n_0)!} + \frac{1}{V(x,u^{1/\gamma})} \\
& \le   \frac{C_3}{V(x,u^{1/\gamma})}
 \end{align}
where $n_0= \cil{(\log_2 C_D)/\gamma}$. We used \eqref{e-gvc} in the second line.
\end{proof}
\begin{proof}[Proof of Theorem \ref{t-evidence}]
We start by showing the off-diagonal lower bound for the case $d(x,y)^\gamma \ge 4 n^{1/\beta_0}$. By \eqref{e-defkt}, \eqref{e-ev2},\eqref{e-gvc} and \eqref{e-qlq}, we have
\begin{align}
 \nonumber k_{n,\beta_0}(x,y) &\ge c_1 \sum_{i=\cil{d(x,y)^\gamma}}^ {2 \cil{d(x,y)^\gamma}} \frac{n}{ (1+ i)^{1+\beta_0}} \frac{1}{V(x,d(x,y))} \\
 &\ge c_2  \frac{n}{ (1+ d(x,y))^{\beta}} \frac{1}{V(x,d(x,y))} \nonumber
\end{align}
for all $x,y \in \Gamma$, for all $n \in \N^*$ such that $d(x,y)^\gamma \ge 4 n^{1/\beta_0}$.
Next, we show the near-diagonal lower bound for the case $d(x,y)^\gamma \le 4 n^{1/\beta_0}$. By \eqref{e-defkt}, \eqref{e-ev2},\eqref{e-gvc} and \eqref{e-qlq}, we have
\[
 k_{n,\beta_0}(x,y) \ge c_3 \sum_{i=\cil{4 n^{1/\beta_0}}}^ {\cil{8 n^{1/\beta_0}}} \frac{n}{ (1+ i)^{1+\beta_0}} \frac{1}{V(x,n^{1/\beta})} \ge   \frac{c_4}{V(x,n^{1/\beta})} \nonumber
\]
for $x,y \in \Gamma$ and for all $n \in \N^*$ such that $d(x,y)^\gamma \le 4 n^{1/\beta_0}$.

We prove the diagonal upper bound below. We use \eqref{e-defkt}, \eqref{e-Ab} and Fubini's theorem to obtain
\begin{equation} \label{e-ev4}
 k_{n,\beta_0}(x,y) = \int_0^\infty f_{n,\beta_0}(u) \sum_{i=0}^\infty \frac{e^{-u} u^i}{i!} q_i(x,y) \, du.	
\end{equation}
Combining \eqref{e-guq} ,\eqref{e-ev4} and Lemma \ref{l-poiss}, there exists $C_2,C_3,C_4>0$ such that
\begin{align}
\nonumber  k_{n,\beta_0}(x,y) &\le C_1 \int_0^\infty f_{n,\beta_0}(u) \sum_{i=0}^\infty \frac{e^{-u} u^i}{i!}\frac{1}{V(x,i^{1/\gamma})} \, du  \\
\nonumber & \le  C_2   \int_0^\infty f_{n,\beta_0}(u) \frac{1}{V(x,u^{1/\gamma})} \, du \\
\nonumber & =  C_2 \int_0^\infty f_{1,\beta_0}(s)\frac{1}{V(x, n^{1/\beta} s^{1/\gamma})} \, ds \\
\nonumber & \le  \frac{C_2} {V(x, n^{1/\beta})} + \frac{C_3} {V(x, n^{1/\beta})}\int_0^1  s^{- \frac{2 -\beta_0}{2 - 2 \beta_0}} e^{-c_1 s^{-\frac{\beta_0}{1 -\beta_0} }} \frac{1}{ s^{(\log_2 C_D)/\gamma}} \, ds  \\
& \le  \frac{C_4} {V(x, n^{1/\beta})}.
\end{align}

Next, we show the off-diagonal upper bound in \eqref{e-evu}.
Combining \eqref{e-defkt}, \eqref{e-ev1}, \eqref{e-guq}, there exists $C_5,C_6,C_7 >0$ such that
\begin{align}
 \nonumber \lefteqn{ k_{n,\beta_0}(x,y) }\\
\nonumber & \le   C_5  n  \sum_{i=1}^\infty  (1+ i)^{-1 -\beta_0}  \frac{C}{V(x,i^{1/\gamma})} \exp \left[ - \left( \frac{d(x,y)^\gamma}{Ci} \right)^{\frac{1}{\gamma-1}} \right] \\
\nonumber & \le   \frac{  C_6 n}{(1+d(x,y))^\beta V(x,d(x,y))}\left( (1 + d(x,y))^\beta\sum_{i=1+ \flr{d(x,y)^\gamma}}^\infty (1+i)^{-1 -\beta_0}  \right.  \\
\nonumber & \hspace{4mm} +\left. d(x,y)^{-\gamma} \sum_{i=1}^{\flr{d(x,y)^\gamma}}    \left( \frac{d(x,y)^\gamma}{i} \right)^{1 +\beta_0 + (\alpha/\gamma)} \exp \left[   - \left( \frac{d(x,y)^\gamma}{Ci} \right)^{\frac{1}{\gamma-1}}  \right] \right)\\
& \le  \frac {C_7 n }{(1+d(x,y))^\beta V(x,d(x,y))} \label{e-odub}
\end{align}
for all $x,y \in \Gamma$ with $x \neq y$ and for all $n \in \N$.
\end{proof}

%%                                                               %%
%% Use the two commands below for producing your bibliography    %%
%% with bibtex, then comment again the commands and include the  %%
%% content of the .bbl file in this file below the commands.     %%
%%                                                               %%

%\bibliographystyle{amsplain}
%\bibliography{yourbibfilename}

% add below the content of your .bbl file produced by bibtex.

\subsection*{Acknowledgements}  We thank the referee for helpful comments and careful reading of the manuscript.

\end{document}